\numberwithin{equation}{section}
\renewcommand{\subsection}{\@startsection
{subsection}{2}{0mm}{\baselineskip}{-0.25cm}
{\normalfont\normalsize\bf}}
\newtheorem{theorem}{Theorem}[section]
\newtheorem{proposition}[theorem]{Proposition}
\newtheorem{lemma}[theorem]{Lemma}
\newtheorem{corollary}[theorem]{Corollary}
\newtheorem{remark}[theorem]{Remark}
\theoremstyle{definition}
\newtheorem*{definition*}{Definition}
\newtheorem*{proposition*}{Proposition}
\newtheorem*{corollary*}{Corollary}
\newtheorem*{lemma*}{Lemma}
\theoremstyle{remark}
\def\qb{{\bar q}}
\def\P{\mathbf P}
\def\cA{\mathcal A}
\def\cB{\mathcal B}
\def\cC{\mathcal C}
\def\cD{\mathcal D}
\def\cQ{\mathcal Q}
\def\bfq{{\bar{\mathbb F}}_q}
\def\cP{\mathcal P}
\def\cX{\mathcal X}
\def\Aut{{\rm Aut}}
\def\deg{{\rm deg}}
\def\min{{\rm min}}
\def\fq{\mathbb F_q}
\begin{document}

\title{On AG codes from a generalization of the Deligne-Lustzig curve of Suzuki type}

\author{Marco Timpanella}

\begin{abstract}
In this paper, Algebraic-Geometric (AG) codes and quantum codes associated to a family of curves which comprises the famous Suzuki curve are investigated. The Weierstrass semigroup at some rational point is computed. Notably, each curve in the family turn out to be a Castle curve over some finite field, and a weak Castle curve over its extensions. This is a relevant feature when codes constructed from the curve are considered.
\end{abstract}

\maketitle

\section{Introduction}
Let $\mathbb{F}_{q}$ be a finite field with $q$ elements, where $q$ is a power of a prime $p$. An algebraic curve over $\mathbb{F}_{q}$ is a projective, absolutely irreducible, non-singular, algebraic variety of dimension $1$ defined over $\mathbb{F}_{q}$.
Among algebraic curves over finite fields (i.e. projective, absolutely irreducible, non-singular, algebraic varieties of dimension $1$ defined over a finite field $\mathbb{F}_q$), a prominent role is played by the so called Deligne-Lusztig
curves associated with the Projective Special Unitary Group $\mathrm{PSU}(3,q)$, the Suzuki group, and the Ree group. In fact, these curves are exceptional both for being optimal with respect to the number of $\mathbb F_\ell$-rational points for some $\ell$ and for having a very large automorphism group with respect to their genus.

Curves possessing a large number of rational points hold significant interest both in their own right and for their applications in Coding Theory. Goppa's work \cite{Goppa1} introduced a fundamental concept: linear codes (the so-called Algebraic-Geometric codes) can be derived from an algebraic curve $\mathcal{X}$ defined over $\mathbb{F}_q$ by evaluating specific rational functions. These functions are chosen in such a way that their poles align with a given $\mathbb{F}_q$-rational divisor $G$, while the evaluation is performed at a distinct $\mathbb{F}_q$-rational divisor $D$ whose support is disjoint from that of $G$. AG codes are proven to have good performances provided that $\mathcal{X}$, $G$ and $D$ are carefully chosen in an appropriate way. In particular, as the relative Singleton defect of an AG code from a curve $\mathcal X$ is upper bounded by the ratio $g/N$, where $g$ is the genus of $\mathcal X$ and $N$ can be as large as the number of $\mathbb F_q$-rational points of $\mathcal X$, it follows that curves with many rational points with respect to their genus are of great interest in Coding Theory. In particular, AG codes from maximal curves (namely curves with the maximum possible number of rational points) have been widely investigated in the last years, see for instance \cite{KNT,LV,LT,MTZ}.

In this paper, we investigate a generalization of the Deligne-Lustzig curve of Suzuki type originally defined in \cite{RODI}, where it was noted that the number of automorphisms exceeds the Hurwitz bound. Our main original contribution is the investigation of the Weierstrass semigroup of the curve at a specific point, see Propositions \ref{semi} and \ref{simmetrico}, which leads to the proof that the curve is actually a Castle curve over $\mathbb F_q$ and a weak Castle curve over $\mathbb F_{q^i}$ for all $i\ge 1$. In this paper we also provide the proofs of some facts which are stated in \cite{RODI} without proofs. Both Castle and weak Castle curves are of particular interest in the context of applications of curves to linear codes. In fact, they combine the good properties of having a reasonable simple
handling and giving codes with excellent parameters. Also, these
codes have self-orthogonality properties which are very close to those
required for obtaining quantum stabilizer codes; see \cite{Castle1,Castle2,Castle3}. As an application of the curve being Castle, we provide a construction of quantum codes associated to the curve; see Proposition \ref{genquan} and the discussion at the end of Section \ref{ultima}.

\section{Background on algebraic curves and AG codes}

For a curve $\mathcal{X}$, we adopt the usual notation and terminology; see for instance \cite{HKT,stichtenoth1993}. In particular,  $\mathbb{F}_q(\mathcal{X})$ and $\mathcal{X}(\mathbb{F}_q)$ denote the field of $\mathbb{F}_q$-rational functions on $\cX$ and the set of $\mathbb{F}_q$-rational points of $\mathcal{X}$, respectively, and ${\rm{Div}}(\mathcal{X})$ denotes the set of divisors of $\mathcal{X}$, where a divisor $D\in {\rm{Div}}(\mathcal{X})$ is a formal sum $n_1P_1+\cdots+n_rP_r$, with $P_i \in \mathcal{X}$, $n_i \in \mathbb{Z}$ and $P_i\neq P_j$ if $i\neq j$.
 The support $\mbox{\rm Supp}(D)$ of the divisor $D$ is the set of points $P_i$ such that $n_i\neq 0$, while $\deg(D)=\sum_i n_i$ is the degree of $D$. 
The divisor $D$ is $\mathbb F_q$-rational if $n_i\neq 0$ implies $P_i\in \cX(\mathbb{F}_q)$.
For a function $f \in \mathbb{F}_q(\mathcal{X})$, $(f)$, $(f)_0$ and $(f)_{\infty}$ are the divisor of $f$, its divisor of zeroes and its divisor of poles, respectively.
The Weierstrass semigroup  $H(P)$ at $P\in \cX$ is $$H(P) := \{n \in \mathbb{N}_0 \ | \ \exists f \in \mathbb{F}_q(\mathcal{X}), (f)_{\infty}=nP\}= \{\rho_0=0<\rho_1<\rho_2<\cdots\}.$$
The Riemann-Roch space associated with an $\mathbb F_q$-rational divisor $D$ is
$$\mathcal{L}(D) := \{ f \in \mathcal{X}(\mathbb{F}_q) \ : \ (f)+D \geq 0\}\cup \{0\}$$
and its vector space dimension over $\mathbb{F}_q$ is $\ell(D)$.

Fix a set of pairwise distinct $\mathbb{F}_q$-rational points $\{P_1,\cdots,P_N\}$, and let $D=P_1+\cdots+P_N$. Take another $\mathbb F_q$-rational divisor $G$ whose support is disjoint from the support of $D$. The AG code $C(D,G)$ is the (linear) subspace of $\mathbb{F}_q^N$ which is defined as the image of the evaluation map $ev :  \mathcal{L}(G) \to \mathbb{F}_q^N$ given by $ev(f) = (f(P_1),f(P_2) ,\ldots,f(P_N))$. 
In particular $C(D,G)$ has length $N$. Moreover, if $N>\deg(G)$ then $ev$ is an embedding and $\ell(G)$ equals the dimension of $C(D,G)$. The minimum distance $d$ of $C(D,G)$, usually depends on the choice of $D$ and $G$. A lower bound for $d$ is $ d^*=N-\deg(G)$, where $d^*$ is called the Goppa designed minimum distance of $C(D,G)$. Furthermore, if $\deg(G)>2\mathfrak{g}-2$ then $k=\deg(G)-\mathfrak{g}+1$ by the Riemann-Roch Theorem; see \cite[Theorem 2.65]{HLP}.\\
The dual code $C^{\bot} (D,G)$ can be obtained in a similar way from the $\mathbb{F}_q(\cX)$-vector space $\Omega(\cX)$ of differential forms over $\cX$.
For a differential $\omega\in \Omega(\cX)$, there is associated a divisor $(\omega)$ of $\cX$, whose degree is $2\mathfrak{g}-2$. For an $\mathbb{F}_q$-rational divisor $D$,  $$\Omega(D):=\{\omega\in \Omega(\cX)\ :\ (\omega)\geq D\}\cup \{0\} $$ 
is a $\mathbb{F}_q$-vector space of rational differential forms over $\cX$.
Then the code $C^{\bot}(D,G)$ coincides with the (linear) subspace of $\mathbb{F}_q^N$  which is the image of the vector space $\Omega(G-D)$ under the linear map $res_D:\Omega(G-D)\mapsto\mathbb{F}_q^N$ given by $res_D(\omega)=(res_{P_1}(\omega),\dots,res_{P_N}(\omega))$, where $res_{P_i}(\omega)$ is the residue of $\omega$ at $P_i$. 
In particular, $C^{\bot}(D,G)$ is an AG code with dimension $k^{\bot}=N-k$ and minimum distance $d^{\bot}\geq \deg{(G)}-2\mathfrak{g}+2$.\\

In the case where $G=\alpha P$, $\alpha \in \mathbb{N}_0$, $P \in \mathcal{X}(\mathbb{F}_q)$, the AG code ${C} (D,G)$ is referred to as one-point AG code. For a Weierstrass semigroup $H(P)= \{\rho_0=0<\rho_1<\rho_2<\cdots\}$ and an integer $\ell \geq0$, the Feng-Rao function is 
$$\nu_\ell := | \{(i,j) \in \mathbb{N}_0^2 \ : \ \rho_i+\rho_j = \rho_{\ell+1}\}|.$$ 
Consider $${C}_{\ell}(P)= {C}^{\bot}(P_1+P_2+\cdots+P_N,\rho_{\ell}P),$$ with $P,P_1,\ldots,P_N$ pairwise distint points in $\mathcal{X}(\mathbb{F}_q)$. The number 
$$d_{ORD} ({C}_{\ell}(P)) := \min\{\nu_{m} \ : \ m \geq \ell\}$$
is a lower bound for the minimum distance $d({C}_{\ell}(P))$ of the code ${C}_{\ell}(P)$ which is called the order bound or the Feng-Rao designed minimum distance of ${C}_{\ell}(P)$; see \cite[Theorem 4.13]{HLP}.\\

\section{Preliminaries}
Throughout the paper,  $q=2^s$ and $q_0=2^h$ with $2h<s$. Also, $\qb=q/q_0$ and
$n_1:=\qb/q_0$.
Let $\cC$ be the plane curve defined over $\fq$ by the equation
$$
X^{q_0}(X^q+X)=Y^q+Y.
$$
Also, let
$$
v(X,Y):=Y^\qb+X^{\qb+1}\,,\qquad w(X,Y):=Y^\qb
X^{n_1-1}+v(X,Y)^{\qb}
$$

Note for $s$ odd and $2h+1=s$ the curve $\cC$ is the Deligne-Lusztig
curve of Suzuki type.

The condition $2h<s$ is motivated as follows. For $2h=s$, that is $q_0=\qb=\sqrt q$,  the curve
$X^{q_0}(X^q+X)=Y^q+Y$ is reducible, as
$X^{q_0}(X^q+X)+Y^q+Y=\Pi_{\alpha^{q_0}=\alpha}(X^{q_0+1}+Y^{q_0}+Y+\alpha)$.
For $2h>s$ the curve $X^{2^h}(X^q+X)=Y^q+Y$ is birationally
equivalent to $X'^{q/2^h}(X'^q+X')=Y^q+Y$ by setting
$X'=Y^{q/2^h}+X^{(q/2^h)+1}$.

\begin{proposition}[Proposition 6.7 in \cite{RODI}]\label{irreducible}
The curve $\cC$ is absolutely irreducible. Moreover, there exists
a unique place of $\fq(\cC)$ centered at $Y_\infty$, the infinite point of the $Y$-axis.
\end{proposition}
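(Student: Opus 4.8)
The plan is to work over $\bfq$ and establish both assertions at once by determining the ramification above the place $X=\infty$. Since $Y^q+Y-X^{q_0}(X^q+X)$ is separable in $Y$ (its $Y$-derivative is $1$), it has a monic irreducible factor $g(Y)\in\bfq(X)[Y]$; put $L:=\bfq(X)[Y]/(g)$, a function field over $\bfq$ with $[L:\bfq(X)]=\deg g\le q$, and let $y\in L$ be the image of $Y$, so that $y^q+y=X^{q_0}(X^q+X)$ and $v(X,y),w(X,y)\in L$ make sense. I claim that \emph{every} place $\mathfrak{P}$ of $L$ above $X=\infty$ satisfies $e(\mathfrak{P}\mid\infty)=q$. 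Granting this: $q=e(\mathfrak{P}\mid\infty)\le[L:\bfq(X)]=\deg g\le q$ forces $\deg g=q$, so $Y^q+Y-X^{q_0}(X^q+X)$ is irreducible over $\bfq(X)$ and $\cC$ is absolutely irreducible; and $\sum_{\mathfrak{P}'\mid\infty}e(\mathfrak{P}'\mid\infty)=[L:\bfq(X)]=q$ (residue degrees being $1$ over $\bfq$) then forces $\mathfrak{P}$ to be the unique place above $X=\infty$. Homogenizing the equation shows the line at infinity meets the projective closure of $\cC$ only at $Y_\infty=[0:1:0]$, while a place of $L$ is above $X=\infty$ exactly when $X$ (equivalently $Y$) has a pole there; hence this unique place is the one centered at $Y_\infty$, and being unique it is $\mathrm{Gal}(\bfq/\fq)$-invariant, hence $\fq$-rational.

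For the claim, fix $\mathfrak{P}$ above $X=\infty$ and set $e:=e(\mathfrak{P}\mid\infty)$, so $v_{\mathfrak{P}}(X)=-e$. From the equation $y$ has a pole at $\mathfrak{P}$, so $v_{\mathfrak{P}}(y^q+y)=q\,v_{\mathfrak{P}}(y)=v_{\mathfrak{P}}(X^{q+q_0}+X^{q_0+1})=-(q+q_0)e$, giving $v_{\mathfrak{P}}(y)=-(q+q_0)e/q$; integrality forces $\qb\mid e$, and writing $e=\qb\mu$ we get $v_{\mathfrak{P}}(X)=-\qb\mu$, $v_{\mathfrak{P}}(y)=-(\qb+1)\mu$. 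Here the functions $v,w$ come into play: raising them to the $q_0$-th power and substituting $y^q=X^{q+q_0}+X^{q_0+1}+y$ cancels the top-degree monomials (characteristic $2$) and produces
\[
v(X,y)^{q_0}=y+X^{q_0+1},\qquad w(X,y)^{q_0}=v(X,y)+X^{\qb-q_0}\,y .
\]
In the first identity $X^{q_0+1}$ dominates $y$, so $v_{\mathfrak{P}}(v(X,y))=-(\qb+n_1)\mu$; substituting this into the second and checking that $X^{\qb-q_0}y$ then dominates $v(X,y)$ — equivalently $N>\qb+n_1$, where $N:=\qb^{2}-q_0\qb+\qb+1$ — yields $q_0\,v_{\mathfrak{P}}(w(X,y))=v_{\mathfrak{P}}(w(X,y)^{q_0})=-N\mu$.

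The conclusion is now arithmetic: $q_0\mid\qb$, so $N\equiv1\pmod{q_0}$ and hence $\gcd(N,q_0)=1$; then $q_0\,v_{\mathfrak{P}}(w(X,y))=-N\mu$ forces $q_0\mid\mu$, while $\qb\mu=e\le[L:\bfq(X)]\le q=\qb q_0$ gives $\mu\le q_0$, so $\mu=q_0$ and $e=q$. I expect the determination of $e$ to be the main obstacle: besides verifying the two identities carefully (keeping track of the exponents $\qb=q/q_0$ and $n_1=\qb/q_0$, all powers of $2$, and of the substitution of $y^q$), one has to pin down which single monomial controls each valuation, since a priori the leading terms may cancel — and they do cancel, both in $v(X,y)$ and in $w(X,y)^{q_0}$, which is exactly why these functions are introduced. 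As a by-product the same computation gives $v_{\mathfrak{P}}(X)=-q$, $v_{\mathfrak{P}}(y)=-(q+q_0)$, $v_{\mathfrak{P}}(v(X,y))=-(q+\qb)$, $v_{\mathfrak{P}}(w(X,y))=-N$, the pole orders that enter the later computation of the Weierstrass semigroup.
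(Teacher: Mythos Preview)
Your proof is correct and follows essentially the same route as the paper's: both arguments introduce the auxiliary functions $v$ and $w$, derive the identity $w^{q_0}=v+x^{\qb-q_0}y$ (the paper writes the exponent as $q_0(n_1-1)$), and use it together with the valuation of $v$ to force the ramification index above $X=\infty$ to equal $q$, which simultaneously yields irreducibility and uniqueness of the place. The only cosmetic difference is that you obtain $v_{\mathfrak P}(v)$ from the cleaner relation $v^{q_0}=y+X^{q_0+1}$, whereas the paper uses the equivalent identity $v^q+v=x^{\qb}(x^q+x)$.
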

\begin{proof}
Let $\cX$ be any component of $\cC$, with equation $H(X,Y)=0$ for some irreducible factor $H(X,Y)$ of $X^{q_0}(X^q+X)-Y^q-Y$. Then $\fq(\cX)=\fq(x,y)$,
with
\begin{equation}\label{zero}
x^{q_0}(x^q+x)=y^q+y\,.
\end{equation}
From
$$
(y^q+y)x^{q_0(n_1-1)}=x^{q+n_1q_0}+x^{n_1q_0+1}
$$
it follows that
$$
(y^{\qb}x^{n_1-1}+x^{n_1+\qb})^{q_0}=x^{n_1q_0+1}+yx^{q_0(n_1-1)}\,,
$$
and hence
\begin{equation}\label{first}
(x^{n_1+\qb})^{q_0}+x^{n_1q_0+1}=(y^{\qb}x^{n_1-1})^{q_0}+yx^{q_0(n_1-1)}\,.
\end{equation}
Now define $v:=v(x,y)$ and $w:=w(x,y)$. Then by straightforward
computation
\begin{equation}\label{v}
v^q+v=x^\qb(x^q+x)\,.
\end{equation}
By (\ref{first}),
$$(y^{\qb}x^{n_1-1})^{q_0}+yx^{q_0(n_1-1)}=v^{\qb
q_0}+v$$ and hence
\begin{equation}\label{w}
w^{q_0}=yx^{q_0(n_1-1)}+v\,.
\end{equation}
Now, let $\cP$ be any place of $\fq(x,y)$ centered at $Y_\infty$.
Let $s:=v_\cP(x)$. By  (\ref{zero}),
\begin{equation}\label{val}
-q\le s <0\,.
\end{equation}
Note that showing $s=-q$ is enough to prove both the statements in
the proposition. By (\ref{zero}) and (\ref{v}) it follows that
\begin{equation}\label{val1}
-q_0\le s <0\,\quad s(q_0+q)=qv_\cP(y)\,,\quad
s(\qb+q)=qv_\cP(v)\,.
\end{equation}
In particular, $\qb$ divides $s$. By (\ref{w}),
$$
q_0v_\cP(w)\ge \text{min}\{v_\cP(y)+q_0(n_1-1)v_\cP(x),v_\cP(v)\}\,,
$$
that is
$$
q_0v_\cP(w)\ge
\text{min}\{s\frac{q_0+q}{q}+q_0(n_1-1)s,s\frac{\qb+q}{q}\}\,.
$$
Since $s<0$ and $n_1>1$,
$s\frac{q_0+q}{q}+q_0(n_1-1)s<s\frac{\qb+q}{q}$ holds, and hence
$$
q_0v_\cP(w)=s\frac{q_0+q}{q}+q_0(n_1-1)s\,.
$$
This implies that $q_0$ divides $\frac{s}{\qb}$, which together
with (\ref{val1}) yields $q\mid s$. Finally $s=-q$ follows from
(\ref{val}), and the proposition is proved.
\end{proof}
According to the proof of Proposition \ref{irreducible}, from now on, $x$ and $y$ denote the algebraic functions in
$\fq(\cC)$ such that $\fq(\cC)=\fq(x,y)$ with
$x^{q_0}(x^q+x)=y^q+y$. Moreover, let $\cP_\infty$ be the only
place of $\fq(\cC)$ centered at $Y_\infty$. Finally, we set
$v=v(x,y)$, $w=w(x,y)$. The following statement follows from the proof of Proposition \ref{irreducible}.
\begin{proposition}\label{scho} In $\fq(\cC)$,
\begin{itemize}
\item[{\rm (1)}] $v_{\cP_\infty}(x)=-q$,
$v_{\cP_\infty}(y)=-q_0-q$;

\item[{\rm (2)}] $v_{\cP_\infty}(v)=-\qb-q$;

\item[{\rm (3)}] $v_{\cP_\infty}(w)=-(q(n_1-1)+\qb+1)$;

\end{itemize}
\end{proposition}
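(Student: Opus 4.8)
The plan is that all three items are already contained, essentially verbatim, in the computation carried out in the proof of Proposition \ref{irreducible}; what remains is to specialize that computation to the value $s=-q$, where $s:=v_{\cP_\infty}(x)$ is the integer introduced there. So the first step is simply to record that $v_{\cP_\infty}(x)=-q$ is precisely the conclusion $s=-q$ reached at the end of the proof of Proposition \ref{irreducible}; this gives the first equality of (1).

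Next I would obtain the remaining valuation of $y$ and the valuation of $v$ from the relations displayed in \eqref{val1}. The middle relation there reads $s(q_0+q)=q\,v_{\cP_\infty}(y)$, so substituting $s=-q$ gives $v_{\cP_\infty}(y)=-(q_0+q)$, which completes (1). Likewise the last relation in \eqref{val1} is $s(\qb+q)=q\,v_{\cP_\infty}(v)$, and $s=-q$ yields $v_{\cP_\infty}(v)=-(\qb+q)$, which is (2).

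For (3), recall that in the proof of Proposition \ref{irreducible} it was shown, starting from \eqref{w} and using that the strict inequality $s\tfrac{q_0+q}{q}+q_0(n_1-1)s<s\tfrac{\qb+q}{q}$ holds because $s<0$ and $n_1>1$, that the minimum in the valuation estimate for $w$ is attained by a single term, so that
\[
q_0\,v_{\cP_\infty}(w)=s\frac{q_0+q}{q}+q_0(n_1-1)s .
\]
Substituting $s=-q$ gives $q_0\,v_{\cP_\infty}(w)=-(q_0+q)-q q_0(n_1-1)$, and dividing through by $q_0$ yields $v_{\cP_\infty}(w)=-1-\qb-q(n_1-1)=-(q(n_1-1)+\qb+1)$, which is (3). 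The only point that needs a little care is this last one: a priori \eqref{w} only gives the inequality $q_0 v_{\cP_\infty}(w)\ge \min\{\cdots\}$, so one must explicitly invoke the strict-inequality/unique-minimum argument from the proof of Proposition \ref{irreducible} to upgrade it to an equality. Beyond that, the proposition is pure bookkeeping with the valuation identities already established, and there is no genuine obstacle.
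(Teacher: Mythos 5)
Your proposal is correct and is exactly what the paper intends: the paper gives no separate proof of Proposition \ref{scho}, merely noting that it ``follows from the proof of Proposition \ref{irreducible}'', and your substitution of $s=-q$ into \eqref{val1} and into the equality $q_0v_{\cP_\infty}(w)=s\frac{q_0+q}{q}+q_0(n_1-1)s$ is precisely that bookkeeping. Your remark that the inequality from \eqref{w} must be upgraded to an equality via the strict-minimum argument is the right point of care, and it is indeed already established in the cited proof.
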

\begin{corollary} The rational function $vx^{n_1-2}/w\in \fq(\cC)$
is a local parameter at $\cP_{\infty}$.
\end{corollary}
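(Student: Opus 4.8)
The plan is to check directly that the function $t:=v\,x^{n_1-2}/w$ satisfies $v_{\cP_\infty}(t)=1$; since a local parameter at a place of a function field is by definition any element whose valuation there equals $1$, this is all that is required. First I would use that $v_{\cP_\infty}$ is a normalized discrete valuation, hence additive on products and compatible with integer powers, so that
\[
v_{\cP_\infty}(t)=v_{\cP_\infty}(v)+(n_1-2)\,v_{\cP_\infty}(x)-v_{\cP_\infty}(w).
\]

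Next I would substitute the values furnished by Proposition \ref{scho}, namely $v_{\cP_\infty}(v)=-\qb-q$, $v_{\cP_\infty}(x)=-q$ and $v_{\cP_\infty}(w)=-(q(n_1-1)+\qb+1)$. In the resulting sum the two occurrences of $\qb$ cancel, the coefficient of $q$ equals $-1-(n_1-2)+(n_1-1)=0$, and only the constant term $+1$ survives; thus $v_{\cP_\infty}(t)=1$, and $t$ is a uniformizer at $\cP_\infty$.

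I do not expect any real obstacle: the corollary is a one-line bookkeeping consequence of Proposition \ref{scho}, the substantive work having already been carried out in proving Propositions \ref{irreducible} and \ref{scho}. The only point deserving a brief remark is that $n_1=\qb/q_0=2^{\,s-2h}\ge 2$, because $2h<s$; hence the exponent $n_1-2$ is a nonnegative integer, $x^{n_1-2}\in\fq(\cC)$ is well defined (equal to $1$ in the boundary case $n_1=2$), and the displayed valuation identity is unambiguous.
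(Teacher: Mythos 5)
Your computation is correct and is exactly the intended argument: the paper states the corollary without proof as an immediate consequence of Proposition \ref{scho}, and adding the three valuations as you do gives $v_{\cP_\infty}(vx^{n_1-2}/w)=1$. Your side remark that $n_1\ge 2$ (so that $x^{n_1-2}$ is a genuine element of $\fq(\cC)$, not a negative power requiring separate sign bookkeeping) is a harmless but accurate observation consistent with the paper's hypothesis $2h<s$.
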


\begin{proposition}[Proposition 6.8 in \cite{RODI}] The genus of $\cC$ is $g_\cC=\frac{1}{2}\qb(q-1)$.
\end{proposition}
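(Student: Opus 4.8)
The plan is to read off $g_\cC$ from the Riemann--Hurwitz genus formula for the degree-$q$ covering $\fq(\cC)/\fq(x)$, computing the different at infinity by means of the conductor--discriminant formula. First I would record the structure of this covering. Since $\fq(\cC)=\fq(x,y)$ with $y^q+y=f(x)$, $f(x):=x^{q_0}(x^q+x)$, and the derivative of $y^q+y-f(x)$ with respect to $y$ is $1$, the extension is separable; all conjugates $y\mapsto y+a$ $(a\in\fq)$ lie in $\fq(\cC)$, so $\fq(\cC)/\fq(x)$ is Galois with group $G\cong(\fq,+)$, elementary abelian of order $q$. By Proposition \ref{scho}(1) we have $v_{\cP_\infty}(x)=-q$, so the ramification index of $\cP_\infty$ over the pole $x_\infty$ of $x$ in $\fq(x)$ equals $q=|G|$; hence $[\fq(\cC):\fq(x)]=q$, the place $\cP_\infty$ is the only place of $\fq(\cC)$ above $x_\infty$, and the covering is totally (wildly) ramified there. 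As $f(x)$ is a polynomial, all ramification of $\fq(\cC)/\fq(x)$ is concentrated above $x_\infty$, so Riemann--Hurwitz gives $2g_\cC-2=-2q+d_{\cP_\infty}$, where $d_{\cP_\infty}$ is the different exponent at $\cP_\infty$.

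To compute $d_{\cP_\infty}$ I would exhibit the $q-1$ quadratic subextensions of $\fq(\cC)/\fq(x)$. For $\beta\in\fq^{*}$ put $z_\beta:=\sum_{i=0}^{s-1}\beta^{2^i}y^{2^i}\in\fq(\cC)$; a direct computation using $\beta^{q}=\beta$ and $y^q+y=f(x)$ gives
\[
\sigma_a(z_\beta)=z_\beta+\mathrm{Tr}_{\fq/\mathbb{F}_2}(\beta a)\ \ (a\in\fq),\qquad z_\beta^{2}+z_\beta=\beta f(x).
\]
Thus $\fq(x,z_\beta)$ is the fixed field of the index-$2$ subgroup $\{a:\mathrm{Tr}_{\fq/\mathbb{F}_2}(\beta a)=0\}$, and as $\beta$ ranges over $\fq^{*}$ these are exactly the $q-1$ quadratic subextensions of $\fq(\cC)/\fq(x)$, each the Artin--Schreier extension $z_\beta^{2}+z_\beta=\beta f(x)$, ramified only above $x_\infty$. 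Now $\beta f(x)=\beta x^{q_0+q}+\beta x^{q_0+1}$ with $q_0+q=q_0(1+\qb)$, $q_0=2^h$; killing the top term by successive substitutions $z_\beta\mapsto z_\beta+\gamma x^{m}$ reduces $\beta f(x)$ modulo $\{T^{2}+T:T\in\fq(x)\}$ to a function whose leading pole at $x_\infty$ has order $\qb+1$. Since $2h<s$ forces $\qb>q_0$, and $\qb=2^{s-h}$ is even, this order $\qb+1$ is odd, hence coprime to $p=2$, so it is in reduced form; therefore the conductor exponent of $\fq(x,z_\beta)/\fq(x)$ at $x_\infty$ is $\qb+2$ for every $\beta\in\fq^{*}$. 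By the conductor--discriminant formula $d_{\cP_\infty}=\sum_{\chi\neq 1}(\qb+2)=(q-1)(\qb+2)$, and Riemann--Hurwitz yields
\[
2g_\cC-2=-2q+(q-1)(\qb+2)=\qb(q-1)-2,
\]
so $g_\cC=\tfrac12\,\qb(q-1)$. (For $2h+1=s$ this recovers the classical genus $q_0(q-1)$ of the Suzuki curve, a useful consistency check.)

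The identities for $z_\beta$ and the final arithmetic are routine. I expect the main obstacle to be the conductor computation: one must perform the reduction of $\beta f(x)$ modulo $\{T^{2}+T:T\in\fq(x)\}$ carefully, check that the reduced leading pole order is $\qb+1$ uniformly in $\beta\in\fq^{*}$ and coprime to the characteristic, and make sure $\fq(\cC)/\fq(x)$ is unramified away from $\cP_\infty$, so that the different divisor is precisely $(q-1)(\qb+2)\,\cP_\infty$. Once this is in place, Riemann--Hurwitz completes the proof.
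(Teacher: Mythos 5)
Your proof is correct, but it reaches the key quantity $(q-1)(\qb+2)$ by a genuinely different route from the paper. The paper works directly inside the elementary abelian extension $\fq(\cC)/\fq(x)$: it takes the explicit local parameter $t=vx^{n_1-2}/w$ at $\cP_\infty$ (supplied by the corollary to Proposition \ref{scho}), computes $v_{\cP_\infty}(\delta_a(t)-t)=\qb+2$ for every nontrivial $\delta_a$ by manipulating the auxiliary functions $v$ and $w$, and then applies Hilbert's different formula $d(\cP_\infty\mid\cQ)=\sum_{a\neq 0}v_{\cP_\infty}(\delta_a(t)-t)=(q-1)(\qb+2)$. You instead decompose the cover into its $q-1$ quadratic Artin--Schreier subextensions $z_\beta^2+z_\beta=\beta f(x)$ cut out by the trace forms $a\mapsto\mathrm{Tr}_{\fq/\mathbb{F}_2}(\beta a)$, reduce each right-hand side modulo $\{T^2+T\}$ to pole order $\qb+1$, and sum the conductors $\qb+2$ via the conductor--discriminant formula. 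Your reduction does work uniformly in $\beta$: the reduced function is $\beta^{1/q_0}x^{\qb+1}+\beta x^{q_0+1}$, of exact pole order $\qb+1$, which is odd because $\qb$ is a power of $2$, and no collision with the lower term occurs because $q_0<\qb$ (this is exactly where the hypothesis $2h<s$ enters). Your route avoids $v$, $w$ and the local parameter altogether and is the standard Garcia--Stichtenoth-style treatment of covers $y^q+y=f(x)$; its cost is the heavier input of Artin conductors and their inflation-invariance. The paper's route is more self-contained given that $v$, $w$ and Proposition \ref{scho} have already been set up for the irreducibility proof, and it stays entirely in the top field. Both arguments correctly use that the cover is unramified away from $x=\infty$ (since $f(x)$ is a polynomial) and totally ramified at $\cP_\infty$, and both conclude with the same Riemann--Hurwitz computation $2g_\cC-2=-2q+(q-1)(\qb+2)$.
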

\begin{proof}
We are going to apply Hilbert's different formula to the
extension $\fq(\cC)/ \fq(x)$. It is easy to see that $\fq(\cC)/
\fq(x)$ is a Galois extension. Its Galois group $\Gamma(\fq(\cC)/
\fq(x))$ consists of the automorphisms $\delta_a$, where
\begin{equation}\label{delta}
 \delta_a \ := \left \{ \begin{array}{lll}
         x \mapsto x, \\
         y \mapsto y+a\,,
                         \end{array}
          \right.
\end{equation}
with $a$ ranging over $\fq$. Then the Hurwitz's genus formula gives
\begin{equation}\label{hur}
2g_\cC-2=-2q+d(\cP_\infty\mid \cQ)\,,
\end{equation}
$\cQ$ being the infinite place of $\fq(x)$. For $a\in \fq$ we
compute $v_{\cP_\infty}(\delta_a(t)-t)$, where $t=vx^{n_1-2}/w$.
By straightforward computation,
$$
\begin{array}{cl} \delta_a(t)-t &  =\displaystyle \frac{vx^{n_1-2}+a^\qb
x^{n_1-2}}{w+a^\qb x^{n_1-1}+a^{\qb^2}}-\frac{vx^{n_1-2}}{w}
\\
{}& =\displaystyle \frac{w(vx^{n_1-2}+a^\qb x^{n_1-2})-(w+a^\qb
x^{n_1-1}+a^{\qb^2})vx^{n_1-2}}{(w+a^\qb
x^{n_1-1}+a^{\qb^2})w} \\

{} &=\displaystyle \frac{a^\qb (wx^{n_1-2}-vx^{2n_1-3}-a^\qb
vx^{n_1-2})}{(w+a^\qb x^{n_1-1}+a^{\qb^2})w}\,.
\end{array}
$$

Taking into account Proposition \ref{scho}, it follows that for
$\delta_a\neq id$
$$v_{\cP_\infty}(\delta_a(t)-t)=\qb+2\,.$$ Hilbert's
different formula yields $d(\cP_\infty\mid \cQ)=(q-1)(\qb+2)$, and
hence by (\ref{hur}) the statement is proved.
\end{proof}

Let $f$ be the morphism $f:=\cC \rightarrow
\P^4({\bar{\mathbb{F}}}_q)$ with coordinate functions
 $$
f:=(f_0:f_1:f_2:f_3:f_4)\,, $$ such that $f_0:=1,\,f_1:=x,\,
f_2:=vx^{n_1-2}, \, f_3:=y,\, f_4:=w$. They are uniquely
determined by $f$ up to a proportionality factor in $\fq(\cC)$.
For each point $P\in\cC$, we have
$f(P)=((t^{-e_P}f_0)(P),\ldots,(t^{-e_P}f_4)(P))$ where $e_P=-{\rm
min}\{v_P(f_0),\ldots,v_P(f_4)\}$ for a local parameter $t$ of
$\cC$ at $P$. It turns out that $f(\cC)$ is a  curve
not contained in any hyperplane of $\P^4(\bfq)$. For a point $P\in
f(\cC)$, the intersection multiplicity of $f(\cC)$ with a
hyperplane $H$ of equation $a_0X_0+\ldots+a_4X_4=0$ is
$v_P(a_0f_0+\ldots+a_4f_4)+e_P$, and the intersection divisor
$f^{-1}(H)$ cut out on $f(\cX)$ by $H$ is defined to be
$f^{-1}(H)=(a_0f_0+\ldots+a_4f_4)+E$ with $E=\sum e_pP$. By
Proposition \ref{scho}, we have $v_{\cP_\infty}(f_1)=-q$,
$v_{\cP_\infty}(f_2)=-(q(n_1-1)+\qb)$, $v_{P_\infty}(f_3)=-q_0-q$,
$v_{\cP_\infty}(f_4)=-(q(n_1-1)+\qb+1)$. Then
$e_{\cP_\infty}=q(n_1-1)+\qb+1$, and the representative
$(f_0/f_4:f_1/f_4:f_2/f_4:f_3/f_4:1)$ of $f$ is defined on
$\cP_{\infty}$. Hence $f(\cP_{\infty})=(0:0:0:0:1)$. For a point
$\cP\in\cC$, an integer $j$ is called a Hermitian $P$-invariant
 if there exists a hyperplane intersecting $f(\cC)$
at $f(P)$ with multiplicity $j$. There are exactly five pairwise
distinct Hermitian $P$-invariants. Such integers arranged in
increasing order define the order sequence of $\cC$ at $P$.
\begin{proposition}
\label{closed} $f(\cC)$ is a non-singular model defined over $\fq$
of $\cC$.
\end{proposition}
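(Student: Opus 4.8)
The plan is to prove the stronger statement that $f$ is a closed embedding of $\cC$ into $\P^4(\bfq)$, defined over $\fq$; then $f(\cC)$ is a non-singular projective curve isomorphic to $\cC$, hence a non-singular model of $\cC$ over $\fq$. Two facts are immediate. First, $f$ is defined over $\fq$: the coordinate functions $f_0=1,\ f_1=x,\ f_2=vx^{n_1-2},\ f_3=y,\ f_4=w$ all lie in $\fq(\cC)$ (here $v$ and $w$ are polynomials over $\fq$ in $x,y$), so the image $f(\cC)$ is $\mathrm{Gal}(\bfq/\fq)$-stable. Second, $f$ is birational onto its image, since $\fq(f(\cC))=\fq(f_1/f_0,\dots,f_4/f_0)$ contains $x=f_1/f_0$ and $y=f_3/f_0$, whence it equals $\fq(x,y)=\fq(\cC)$. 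It thus remains to check that $f$ is injective and an immersion at every point of $\cC$ (equivalently, in the terminology recalled before the statement, that the smallest positive Hermitian $P$-invariant equals $1$ for every $P$, together with injectivity); as $\cC$ is complete and non-singular, this forces $f$ to be an isomorphism onto the non-singular projective curve $f(\cC)$.

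For injectivity, note that $\fq(\cC)/\fq(x)$ is Galois of degree $q$ (as in the genus computation), so $\deg(x)=q$, and since $v_{\cP_\infty}(x)=-q$ by Proposition \ref{scho}(1), $\cP_\infty$ is the unique place over the infinite place of $\fq(x)$ and every other place $P$ lies over a finite value $x(P)=a\in\bfq$. On the affine part, the polynomial $X^{q_0}(X^q+X)-Y^q-Y$ has $Y$-derivative identically $1$, so the affine plane model is non-singular; moreover such a place $P$ is determined by the pair $(x(P),y(P))$, because the fibre $x^{-1}(a)$ consists of the $q$ distinct places coming from the $q$ distinct roots of the separable polynomial $Y^q+Y-a^{q_0}(a^q+a)$. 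At any such $P$ all the $f_i$ are regular and $f_0=1$, so $e_P=0$ and $f(P)=(1:x(P):\cdots:y(P):\cdots)$, from which $x(P),y(P)$ can be read off; hence $f$ is injective on $\cC\setminus\{\cP_\infty\}$, and since $f(\cP_\infty)=(0:0:0:0:1)$ while all other images have nonzero zeroth coordinate, $f$ is injective on $\cC$.

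For the immersion property I would treat affine places and $\cP_\infty$ separately. Hurwitz's formula in the genus computation has the single contribution $d(\cP_\infty\mid\cQ)$, which is exactly the statement that $\fq(\cC)/\fq(x)$ is unramified away from $\cP_\infty$; so for any affine place $P$ the function $x-x(P)$ is a local parameter at $P$, hence $dx\neq 0$ at $P$, and since $f_1/f_0=x$ in the chart $X_0\neq 0$ around $f(P)$ the differential $df_P$ is injective. At $\cP_\infty$, the Corollary following Proposition \ref{scho} gives that $vx^{n_1-2}/w=f_2/f_4$ is a local parameter; in the chart $X_4\neq 0$ around $f(\cP_\infty)=(0:0:0:0:1)$ this is one of the coordinate functions of $f$, so $df_{\cP_\infty}$ is injective as well. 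Thus $f$ is everywhere an immersion, and together with injectivity this shows $f$ is a closed embedding; $f(\cC)$ is then the desired non-singular model over $\fq$.

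I do not expect a genuine obstacle: the argument is essentially bookkeeping, the substantive inputs being already available — the valuations at $\cP_\infty$ from Proposition \ref{scho}, the local parameter from the Corollary, and the fact (visible in the genus computation) that $x\colon\cC\to\P^1$ ramifies only at $\cP_\infty$. The two points where a little care is needed are: justifying ``injective $+$ everywhere an immersion $+$ $\cC$ complete $\Rightarrow$ closed embedding'' (or, in the language of the paper, that the second element of the order sequence of $\cC$ at $P$ equals $1$ for all $P$ together with injectivity is enough); and using the hypothesis $2h<s$ — equivalently $n_1\ge 2$ and $\qb>q_0$ — to see that the pole orders at $\cP_\infty$ are strictly ordered as $0<q<q_0+q<q(n_1-1)+\qb<q(n_1-1)+\qb+1$, so that $e_{\cP_\infty}=q(n_1-1)+\qb+1$ and the second $(\cP_\infty)$-order is $q(n_1-1)+\qb+1-\bigl(q(n_1-1)+\qb\bigr)=1$.
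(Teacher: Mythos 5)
Your proof is correct and follows the same strategy as the paper's --- showing that $f$ is a closed embedding --- except that the paper's proof is a two-line assertion (``by the above discussion, $f$ is bijective and $f(\cC)$ has no singular point'') while you actually supply the substance: injectivity via the structure of the unramified degree-$q$ Galois covering $x$ away from $\cP_\infty$ together with $f(\cP_\infty)=(0:0:0:0:1)$, and the immersion property via the local parameters $x-x(P)$ at affine places and $vx^{n_1-2}/w$ at $\cP_\infty$. Both checks are sound, as is your closing observation that the hypothesis $2h<s$ (equivalently $n_1\ge 2$) is what strictly orders the pole numbers at $\cP_\infty$ and forces the second $\cP_\infty$-order to equal $1$.
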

\begin{proof} We show that $f$ is a closed embedding. By the above discussion,
$f$ is bijective and $f(\cC)$ has no singular point.
\end{proof}

\section{Some automorphisms of $\fq(\cC)$}
For $b,c,d \in \fq$ with $d\neq 0$, we define the following
automorphisms of $\fq(\cC)$:
$$
 \alpha_{b,c} \ := \left \{ \begin{array}{lll}
         x \mapsto x+b, \\
         y \mapsto y+b^{q_0}x+c;
                         \end{array}
          \right.
$$
\begin{equation}
\label{gamma}
 \beta_d \ := \left \{ \begin{array}{lll}
         x \mapsto dx, \\
         y \mapsto d^{q_0+1}y;
                         \end{array}
          \right.
\end{equation}
Note that $\alpha_{b,c}^2=\delta_{b^{q_0+1}}$, with
$\delta_{b^{q_0+1}}$ as in (\ref{delta}). Let $\cA$, $\cB$, $\cD$
be the following subgroups of $\Aut (\cC)$:
$$\cA:=\{\alpha_{b,c}\mid b,c \in \fq\}\,,\quad
\cB:=\{\beta_{d}\mid d \in \fq, d\neq 0\}\,,\quad
\cD:=\{\delta_{a}\mid a \in \fq\}\,.$$

Let $\Gamma$ be the automorphism group of $\fq(\cC)$ generated by
$\cA$ and $\cB$. 
The number of elements in $\Gamma$ is at least $q^2(q-1)$, that is
$\#\Aut(\fq(\cX))>84(g_\cC-1)$ apart from the case $q\le 16$. The
sets $\{\cP_\infty\}$ and $\{\cC(\fq)\}\setminus \{\cP_\infty\}$
are two shorts orbits of $\Aut(\fq(\cX))$. The former is a
non-tame orbit, while the latter is tame.

\section{Weierstrass semigroup}\label{Section:semi}

The aim of this section is to prove the following result.

\begin{proposition}\label{semi} The Weierstrass semigroup at $\mathcal P_\infty$ is $H(\mathcal P_\infty)=\langle q,q+q_0,q+\bar{q},q(n-1)+\bar{q}+1 \rangle$.
\end{proposition}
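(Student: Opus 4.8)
The plan is to establish two inclusions. First I would show that the four listed elements actually lie in $H(\mathcal P_\infty)$: this is immediate from Proposition \ref{scho}, since $v_{\mathcal P_\infty}(x)=-q$, $v_{\mathcal P_\infty}(y)=-(q+q_0)$, $v_{\mathcal P_\infty}(v)=-(q+\qb)$, and $v_{\mathcal P_\infty}(w)=-(q(n_1-1)+\qb+1)$, and each of these functions has a pole only at $\mathcal P_\infty$ (this last fact for $x$ follows from Proposition \ref{irreducible}, and for $y,v,w$ from the defining relations $y^q+y=x^{q_0}(x^q+x)$, $v^q+v=x^{\qb}(x^q+x)$, $w^{q_0}=yx^{q_0(n_1-1)}+v$, which force any pole of $y$, $v$ or $w$ to be a pole of $x$). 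Hence the numerical semigroup $S:=\langle q,\,q+q_0,\,q+\qb,\,q(n_1-1)+\qb+1\rangle$ is contained in $H(\mathcal P_\infty)$. Note I am reading the statement's ``$n$'' as $n_1$ and its ``$\bar q$'' as $\qb$.

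The reverse inclusion is the substantive part. The standard device is a genus count: since $\mathcal X$ is non-singular and $\mathbb F_q(\mathcal C)=\mathbb F_q(x,y)$ with the given relation, the number of gaps of $H(\mathcal P_\infty)$ equals $g_{\mathcal C}=\tfrac12\qb(q-1)$. So it suffices to prove that the complement $\mathbb N_0\setminus S$ has exactly $\tfrac12\qb(q-1)$ elements, because then $S\subseteq H(\mathcal P_\infty)$ together with $|\mathbb N_0\setminus S|=|\mathbb N_0\setminus H(\mathcal P_\infty)|$ forces $S=H(\mathcal P_\infty)$. To count the gaps of $S$ I would set up a normal form for elements of $S$: every element of $S$ can be written, working modulo $q$, by using the generators $q+q_0$, $q+\qb$ and $q(n_1-1)+\qb+1$ to move through residue classes mod $q$, then topping up with copies of $q$. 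Concretely, one writes an arbitrary nonnegative integer as $aq + b(q+q_0) + c(q+\qb) + e\bigl(q(n_1-1)+\qb+1\bigr)$ and asks for which residues mod $q$ (and then which minimal representative in that residue class) a solution with $a\ge 0$ exists; recalling $q=2^s$, $q_0=2^h$, $\qb=q/q_0=2^{s-h}$, $n_1=\qb/q_0=2^{s-2h}$, the residues $q_0$, $\qb$ and $\qb+1$ generate a transparent subgroup structure mod $q$, so the reachable residue classes and the minimal conductor within each class can be enumerated explicitly.

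The main obstacle I anticipate is precisely this combinatorial bookkeeping: showing that the semigroup generated by $\{q,\,q+q_0,\,q+\qb,\,q(n_1-1)+\qb+1\}$ has exactly $\tfrac12\qb(q-1)$ gaps is a finite but delicate lattice-point count, and one must be careful that the fourth generator $q(n_1-1)+\qb+1$ is genuinely needed (it supplies the residue classes mod $q$ that $q_0$ and $\qb$ alone cannot reach, since $\gcd(q_0,\qb,q)$ would otherwise leave a proper subgroup) and that no redundancy inflates the count. An alternative, perhaps cleaner route that avoids the raw gap count is to exhibit, for each nongap produced in the normal-form analysis, an explicit monomial $x^a v^b x^{c(n_1-1)}y^{?}w^e$-type function whose pole divisor is exactly that multiple of $\mathcal P_\infty$, and simultaneously verify that the listed ``candidate gaps'' really are gaps by a dimension argument via Riemann--Roch (using $\dim\mathcal L(m\mathcal P_\infty)$ for the relevant small $m$); either way the genus $\tfrac12\qb(q-1)$ is the pivot that closes the argument. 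I would present the gap-count version, organizing the reachable residues mod $q$ into the groups dictated by the $2$-adic valuations of $q_0,\qb,\qb+1$, and then sum a short arithmetic series to land on $\tfrac12\qb(q-1)$.
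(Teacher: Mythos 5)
Your proposal is correct and follows essentially the same route as the paper: the inclusion $\langle q,\,q+q_0,\,q+\qb,\,q(n_1-1)+\qb+1\rangle\subseteq H(\mathcal P_\infty)$ from the pole divisors of $x,y,v,w$, then a count of the gaps of that numerical semigroup organized by residue classes modulo $q$, closed by comparison with the genus $\tfrac12\qb(q-1)$. The paper packages your ``minimal representative per residue class'' step as an Ap\'ery-set computation, with the labor-saving twist that any complete (not necessarily minimal) set of representatives already yields an upper bound on the number of gaps, so minimality never has to be verified --- the upper bound meets the lower bound $g(S)\geq g_\cC$ and forces equality.
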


Let $A$ be the numerical semigroup generated by $\{q, q+q_0, q+\bar{q},q(n-1)+\bar{q}+1\}$. To prove $A=H(\mathcal P_\infty)$, we will make use of the following basic definitions and results from the theory of numerical semigroups.

\begin{definition*}
Let $S\subset \mathbb{N}$ be a numerical semigroup.
\begin{itemize}
\item The genus $g(S)$ of $S$ is the cardinality of the set $\mathbb{N}\setminus S$ (which, by definition, is finite);
\item The conductor $c(S)$ of $S$ is  $c(S)=1+{\rm max}\{x\in \mathbb{N}\setminus S\}$. Also, $S$ is symmetric if $c(S)=2g(S)$;
\item The multiplicity $m(S)$ of $S$ is  $m(S)={\rm min}\{x\in S\}$;
\item For a non-zero element $s\in S$, the Ap\'ery set of $s$ is $$Ap(S,s):=\{x \in S \,|\,x-s \not\in S\}.$$
\end{itemize}
\end{definition*}
Note that $Ap(S,m(S))$ provides a complete set of minimal representatives for the congruence classes of $\mathbb{Z}$ modulo $m(S)$. As a consequence, the semigroup can be also described as $S = \{t m(S) + x\,:\, t\geq 0 \text{ and } x\in Ap(S,m(S))\}$.
A strong connection between the Ap\'ery sets, the genus, and the conductor of a numerical semigroup is given by the following result.
\begin{proposition}\label{resApery}
Let $S$ be a numerical semigroup and $s$ a non-zero element of $S$. Then $|Ap(S,s)|=s$, 
\begin{equation}\label{genereAp}
g(S)=\frac{1}{s}\sum_{x\in Ap(S,s)} x- \frac{s-1}{2},
\end{equation}
and
\begin{equation}\label{conductorAp}
c(S)=1+{\rm max}\{x\in Ap(S,s)\}-s.
\end{equation}
\end{proposition}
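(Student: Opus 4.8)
The plan is to establish the three claims in order, each one essentially by a counting argument in $\mathbb{Z}/s\mathbb{Z}$. First I would prove $|Ap(S,s)|=s$. The key observation is that if $x\in S$ then exactly one element of the coset $x+s\mathbb{Z}$ lies in $Ap(S,s)$: namely, among the elements $x, x-s, x-2s,\dots$ that belong to $S$, there is a smallest one (the set of such is bounded below by $0$ and nonempty since $x$ itself qualifies), and that smallest element $x_0$ satisfies $x_0-s\notin S$, hence $x_0\in Ap(S,s)$; conversely no two distinct elements of $Ap(S,s)$ can be congruent mod $s$, since if $a>b$ were both in $Ap(S,s)$ with $a\equiv b \pmod s$, then $a-s\equiv b \pmod{s}$ and $a-s\geq b$, and writing $a-s=b+ks$ with $k\geq 0$ shows $a-s=b+ks\in S$ (as $b\in S$ and $s\in S$), contradicting $a\in Ap(S,s)$. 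Finally every coset mod $s$ is represented in $S$, because $S$ contains all sufficiently large integers (its complement in $\mathbb{N}$ is finite), so each coset meets $S$ and therefore meets $Ap(S,s)$. This gives a bijection between $Ap(S,s)$ and $\mathbb{Z}/s\mathbb{Z}$, proving $|Ap(S,s)|=s$, and simultaneously justifies the remark preceding the proposition that $S=\{t s+x : t\geq 0,\ x\in Ap(S,s)\}$.

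Next I would derive the genus formula \eqref{genereAp}. Write $Ap(S,s)=\{w_0,w_1,\dots,w_{s-1}\}$ where $w_i$ is the unique element of $Ap(S,s)$ congruent to $i$ modulo $s$. For each $i$, the elements of $S$ congruent to $i$ mod $s$ are exactly $w_i, w_i+s, w_i+2s,\dots$, so the elements of $\mathbb{N}\setminus S$ congruent to $i$ mod $s$ are exactly $i, i+s,\dots, w_i-s$, of which there are $(w_i-i)/s$. Summing over $i=0,\dots,s-1$,
$$
g(S)=\sum_{i=0}^{s-1}\frac{w_i-i}{s}=\frac{1}{s}\sum_{x\in Ap(S,s)}x-\frac{1}{s}\sum_{i=0}^{s-1}i=\frac{1}{s}\sum_{x\in Ap(S,s)}x-\frac{s-1}{2},
$$
which is \eqref{genereAp}.

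Finally, for the conductor formula \eqref{conductorAp}: let $x^\ast=\max\{x\in Ap(S,s)\}$ and let $i^\ast$ be its residue mod $s$. I claim $c(S)=x^\ast-s+1$. Indeed $x^\ast-s\notin S$ by definition of the Ap\'ery set, so $x^\ast-s\in \mathbb{N}\setminus S$ and hence $c(S)\geq x^\ast-s+1$. Conversely, take any integer $m\geq x^\ast-s+1$; I must show $m\in S$. Let $j$ be the residue of $m$ mod $s$ and $w_j$ the corresponding Ap\'ery element; since $w_j\leq x^\ast\leq m+s-1$ and $w_j\equiv m\pmod s$, we get $w_j\leq m$ (if $w_j>m$ then $w_j\geq m+s>m+s-1\geq x^\ast$, impossible), so $m=w_j+ks$ for some $k\geq 0$, whence $m\in S$. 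Thus every integer $\geq x^\ast-s+1$ lies in $S$, giving $c(S)\leq x^\ast-s+1$ and completing the proof. The main point requiring care throughout is the uniqueness of the residue representative in $Ap(S,s)$ together with the fact that the $S$-elements in each residue class form an arithmetic progression with common difference $s$ starting at that representative; once this structural description is in hand, all three formulas are immediate bookkeeping. $\qed$
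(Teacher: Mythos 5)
Your proof is correct. The paper states Proposition \ref{resApery} without proof, citing it as a standard fact from the theory of numerical semigroups; your argument is the standard one (each residue class mod $s$ meets $S$ in an arithmetic progression $w_i+s\mathbb{N}_0$ whose initial term $w_i$ is the unique Ap\'ery element in that class, after which the three formulas are direct counting -- these are Selmer's formulas for the genus and Frobenius number), and all three steps are sound. The only quibble is the degenerate case $S=\mathbb{N}$, where $\max Ap(S,s)-s=-1$ is not a gap and the lower bound on the conductor needs the convention $c(\mathbb{N})=0$; this is immaterial here, since the semigroup to which the proposition is applied has positive genus.
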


Observe that if $\bar{S}\subset S$ is a complete set of representatives for the congruence classes of $\mathbb{Z}$ modulo $m(S)$ (not necessarily minimal), then 
\begin{equation}\label{eq:completeset}
g(S)\leq \frac{1}{m(S)}\sum_{x\in \bar{S}} x- \frac{m(S)-1}{2},
\end{equation}
and the equality holds if and only if $\bar{S}=Ap(S,m(S))$.

By Proposition \ref{scho}, $\{q, q+q_0, q+\bar{q},q(n-1)+\bar{q}+1\}\subseteq H(\mathcal P_\infty)$ and hence $A$ is contained in $H(\mathcal P_\infty)$. In particular, $g(A)\geq g(H(\mathcal P_\infty))=\frac{1}{2}\bar{q}(q-1)$. To prove the other inequality, we explicitly compute the Ap\'ery set $Ap(A,q)$. Note that  $q$ is the multiplicity of $A$.

\begin{proposition}\label{barA}
The set 
$$
\bar{A}:=\{t_1(q+q_0)+t_2(q+\bar{q})+t_3(q(n-1)+\bar{q}+1)\,:\, 0\leq t_1\leq n-1, 0\leq t_2\leq q_0-1, 0\leq t_3\leq q_0-1\}
$$
is a complete set of representatives for the congruence classes of $\mathbb{Z}$ modulo $q$.
\end{proposition}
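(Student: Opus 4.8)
The plan is to exploit the fact that the index box defining $\bar A$ has exactly $n\cdot q_0\cdot q_0=\qb q_0=q$ points (using $n=\qb/q_0$), so it suffices to prove that two distinct triples $(t_1,t_2,t_3)$ and $(t_1',t_2',t_3')$ in that box always yield integers lying in different residue classes modulo $q$. Together with the inclusion $\bar A\subseteq A\subseteq H(\mathcal P_\infty)$ coming from Proposition \ref{scho}, this gives at once that $\bar A$ is a complete set of representatives for $\mathbb Z/q\mathbb Z$.

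The first step is to reduce the three relevant generators modulo $q$. Since $q(n-1)\equiv 0\pmod q$, one has $q+q_0\equiv q_0$, $q+\qb\equiv\qb$ and $q(n-1)+\qb+1\equiv\qb+1\pmod q$, so the class of a generic element of $\bar A$ is $t_1q_0+t_2\qb+t_3(\qb+1)\pmod q$. Then I would rewrite everything through the divisibility chain $q_0\mid\qb\mid q$: substituting $\qb=nq_0$ and $q=\qb q_0=nq_0^2$ turns this class into $q_0\bigl(t_1+n(t_2+t_3)\bigr)+t_3\pmod{nq_0^2}$, which is the convenient normal form to work with.

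The second step is to peel off the three coordinates one at a time. Assuming two triples give the same class, reading the equality modulo $q_0$ annihilates the first summand and forces $t_3\equiv t_3'\pmod{q_0}$, hence $t_3=t_3'$ because both lie in $\{0,\dots,q_0-1\}$. Cancelling $t_3$ and dividing the surviving congruence $q_0(t_1+nt_2)\equiv q_0(t_1'+nt_2')\pmod{nq_0^2}$ by $q_0$ gives $t_1+nt_2\equiv t_1'+nt_2'\pmod{nq_0}$; reducing this modulo $n$ forces $t_1=t_1'$ (both in $\{0,\dots,n-1\}$), and dividing the remainder by $n$ forces $t_2=t_2'$ (both in $\{0,\dots,q_0-1\}$). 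Thus the map from the index box to $\mathbb Z/q\mathbb Z$ is injective, hence bijective by the count above.

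I do not expect a genuine obstacle here: the argument is essentially bookkeeping built on the nested divisibilities $q_0\mid\qb\mid q$ and on the fact that the three index ranges have lengths $q_0$, $n$, $q_0$ whose product is exactly $q$. The only point demanding care is to carry out the successive divisions of the congruences by $q_0$ and then by $n$ in the correct order and to pair each reduction with the matching index range, so that the counting identity $\#\bar A=q$ is actually achieved and no residue class is missed.
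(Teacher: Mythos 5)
Your proof is correct and follows essentially the same route as the paper: both arguments count the index box ($n\cdot q_0\cdot q_0=q$ triples) and then establish injectivity modulo $q$ by successive reductions along the divisibility chain $q_0\mid\qb\mid q$, peeling off $t_3$, then $t_1$, then $t_2$ in the same order. The only cosmetic difference is that you first pass to the normal form $q_0\bigl(t_1+n(t_2+t_3)\bigr)+t_3 \pmod{nq_0^2}$ and divide the congruences, while the paper reduces the original expressions modulo $q_0$, $\qb$, and $q$ directly.
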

\begin{proof}
Clearly the size of $\bar{A}$ is at most $nq_0^2=q$. To prove the claim, we show that if $\bar{a}$ and $\bar{a}^\prime$ are two distinct elements of $\bar{A}$, then $\bar{a}\not\equiv \bar{a}^\prime \pmod{q}$. Indeed, let 
\begin{eqnarray*}
\bar{a}&=&t_1(q+q_0)+t_2(q+\bar{q})+t_3(q(n-1)+\bar{q}+1),\\
\bar{a}^\prime&=&t_1^\prime(q+q_0)+t_2^\prime(q+\bar{q})+t_3^\prime(q(n-1)+\bar{q}+1),
\end{eqnarray*}
and assume $\bar{a}\equiv \bar{a}^\prime \pmod{q}$. As $q_0$ divides $q$, we have $\bar{a}\equiv \bar{a}^\prime \pmod{q_0}$ and hence $t_3 \equiv t_3^\prime \pmod{q_0}$. Since $t_3,t_3^\prime\in \{0,\ldots,q_0-1\}$, we obtain $t_3=t_3^\prime$. The same argument, replacing $q_0$ with $\bar{q}$, yields $\bar{a}\equiv \bar{a}^\prime \pmod{\bar{q}}$ and hence $t_1q_0\equiv t_1^\prime q_0 \pmod{\bar{q}}.$ Then, $\bar{q}=nq_0$ yields $t_1\equiv t_1^\prime \pmod{n}$, which, combined with $t_1,t_1^\prime\in \{0,\ldots,n-1\}$, gives $t_1=t_1^\prime$.
Finally, $t_2(q+\bar{q}) \equiv t_2^\prime(q+\bar{q}) \pmod{q}$ yields $t_2\equiv t_2^\prime \pmod{q_0}$ and so $t_2=t_2^\prime$. Therefore $\bar{a}=\bar{a}^\prime$, which completes the proof.
\end{proof}

We are now in position to prove Proposition \ref{semi}.

\begin{proposition}\label{proofsemigruppo}
$H(\mathcal{P}_\infty)=A$.
\end{proposition}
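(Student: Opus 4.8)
The plan is to pinch $g(A)$ between $g(H(\mathcal{P}_\infty))$ from both sides and then use the elementary fact that a numerical semigroup contained in another one with the same (finite) genus must coincide with it. One inequality is already available: from $A\subseteq H(\mathcal{P}_\infty)$ we have $g(A)\ge g(H(\mathcal{P}_\infty))=\tfrac12\bar{q}(q-1)$. For the reverse inequality I would apply \eqref{eq:completeset} to the set $\bar{A}$ of Proposition~\ref{barA}. By construction $\bar{A}\subseteq A$, and since $q$ is the multiplicity of $A$, Proposition~\ref{barA} says precisely that $\bar{A}$ is a complete set of representatives for $\mathbb{Z}/m(A)\mathbb{Z}$; in particular $|\bar{A}|=nq_0^2=q$ (the triple-parametrization is injective, not merely of bounded size). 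Hence \eqref{eq:completeset} gives $g(A)\le \tfrac1q\sum_{x\in\bar{A}}x-\tfrac{q-1}{2}$.

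The next step is to evaluate $\sum_{x\in\bar{A}}x$ by splitting over the three independent indices $t_1,t_2,t_3$; since each runs over a full range, the sum factors and one gets
\[
\sum_{x\in\bar{A}}x=(q+q_0)\,q_0^2\binom{n}{2}+nq_0\binom{q_0}{2}\big[(q+\bar{q})+(q(n-1)+\bar{q}+1)\big].
\]
Substituting $\bar{q}=nq_0$ and $q=nq_0^2$ (so that $nq_0^2=q$), this collapses to $\tfrac q2\big[(n-1)(q+q_0)+(q_0-1)(qn+2\bar{q}+1)\big]$. Dividing by $q$ and subtracting $\tfrac{q-1}{2}$, a routine simplification — again rewriting everything in terms of $q_0$ and $n$ — shows that $(n-1)(q+q_0)+(q_0-1)(qn+2\bar{q}+1)-(q-1)=\bar{q}(q-1)$, so that $g(A)\le \tfrac12\bar{q}(q-1)$.

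Combining the two estimates, $g(A)\le \tfrac12\bar{q}(q-1)=g(H(\mathcal{P}_\infty))\le g(A)$, so all are equalities. Since $A\subseteq H(\mathcal{P}_\infty)$ forces $\mathbb{N}\setminus H(\mathcal{P}_\infty)\subseteq\mathbb{N}\setminus A$, and these are finite sets of the same cardinality, they coincide; hence $A=H(\mathcal{P}_\infty)$. As a byproduct, the equality case of \eqref{eq:completeset} identifies $\bar{A}$ with $Ap(A,q)$, which via Proposition~\ref{resApery} also yields the conductor and the list of gaps. There is no real conceptual obstacle in this argument: the only points requiring care are making sure $\bar{A}$ genuinely has size $q$ (via Proposition~\ref{barA}, so that \eqref{eq:completeset} applies to it as a complete residue system of the right cardinality) and carrying out the final algebraic simplification cleanly, consistently replacing $\bar{q}$ by $nq_0$ and $q$ by $nq_0^2$.
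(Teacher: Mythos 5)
Your proposal is correct and follows essentially the same route as the paper: bound $g(A)$ from below via the inclusion $A\subseteq H(\mathcal{P}_\infty)$, from above via \eqref{eq:completeset} applied to the complete residue system $\bar{A}$ of Proposition~\ref{barA}, and verify by direct computation that $\frac{1}{q}\sum_{x\in\bar{A}}x-\frac{q-1}{2}=\frac{1}{2}\bar{q}(q-1)$. Your factored evaluation of the sum and the final identification of the gap sets are just minor reorganizations of the paper's argument, and your closing observation that $\bar{A}=Ap(A,q)$ is exactly the paper's Remark~\ref{remApery}.
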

\begin{proof}
As we already observed, $A\subseteq H(\mathcal{P}_\infty)$ and hence $g(A)\geq g(H(\mathcal{P}_\infty))$. On the other hand, Proposition \ref{barA} together with Equation \ref{eq:completeset} yield
$$
g(A)\leq \frac{1}{q}\sum_{x\in \bar{A}} x-\frac{q-1}{2}.
$$
By straightforward computation:
\begin{eqnarray*}
\sum_{x\in \bar{A}} x&=&\sum_{t_1=0}^{n-1}\sum_{t_2=0}^{q_0-1}\sum_{t_3=0}^{q_0-1}(t_1(q+q_0)+t_2(q+\bar{q})+t_3(q(n-1)+\bar{q}+1))\\
&=&\sum_{t_1=0}^{n-1}\sum_{t_2=0}^{q_0-1}\left(t_1q_0(q+q_0)+t_2q_0(q+\bar{q})+\frac{q_0(q_0-1)}{2}(q(n-1)+\bar{q}+1)\right)\\
&=&\sum_{t_1=0}^{n-1}\left(t_1q_0^2(q+q_0)+\frac{q_0^2(q_0-1)}{2}(q+\bar{q})+\frac{q_0^2(q_0-1)}{2}(q(n-1)+\bar{q}+1)\right)\\
&=&\left(\frac{n(n-1)q_0^2}{2}(q+q_0)+\frac{nq_0^2(q_0-1)}{2}(q+\bar{q})+\frac{nq_0^2(q_0-1)}{2}(q(n-1)+\bar{q}+1)\right)\\
&=&\frac{nq_0^2}{2}(\bar{q}q-\bar{q}+q-1)=\frac{q}{2}(\bar{q}q-\bar{q}+q-1),
\end{eqnarray*}
whence
\begin{eqnarray*}
g(A)\leq \frac{1}{2}(\bar{q}q-\bar{q}+q-1)-\frac{q-1}{2}=\frac{\bar{q}(q-1)}{2}=g_C=g(H(\mathcal{P}_\infty)).
\end{eqnarray*}
Therefore $g(A)=g(H(\mathcal{P}_\infty))$ and $A=H(\mathcal{P}_\infty)$.
\end{proof}

\begin{remark}\label{remApery}
By the proof of Proposition \ref{proofsemigruppo}, $g(A)=\frac{1}{q}\sum_{x\in \bar{A}} x-\frac{q-1}{2}$. Therefore, $\bar{A}$ is exactly the Ap\'ery set $Ap(A,q)=Ap(H(\mathcal{P}_\infty),q)$.
\end{remark}

\begin{proposition}\label{simmetrico}
The Weierstrass semigroup at $\mathcal P_\infty$ is symmetric.
\end{proposition}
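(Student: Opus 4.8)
The plan is to read off the conductor of $A=H(\mathcal{P}_\infty)$ directly from its Ap\'ery set at $q$, which has already been identified in Remark \ref{remApery}, and then compare it with twice the genus. Recall that a numerical semigroup $S$ is symmetric precisely when $c(S)=2g(S)$. Since Proposition \ref{proofsemigruppo} gives $A=H(\mathcal{P}_\infty)$ and hence $g(A)=g_\mathcal{C}=\frac{1}{2}\bar{q}(q-1)$, it suffices to prove that $c(A)=\bar{q}(q-1)$.

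First I would pin down $\max\big(Ap(A,q)\big)$. By Remark \ref{remApery} we have $Ap(A,q)=\bar{A}$, and $\bar{A}$ consists of all values $t_1(q+q_0)+t_2(q+\bar{q})+t_3\big(q(n-1)+\bar{q}+1\big)$ with $0\le t_1\le n-1$ and $0\le t_2,t_3\le q_0-1$. As each of the integers $q+q_0$, $q+\bar{q}$, $q(n-1)+\bar{q}+1$ is strictly positive, this linear form is strictly increasing in every $t_i$, so its maximum over the index box is attained at the corner $t_1=n-1$, $t_2=t_3=q_0-1$; denote this value by $M$.

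Next comes a short computation of $M$. Collecting the two summands carrying the common factor $q_0-1$, one gets $M=(n-1)(q+q_0)+(q_0-1)\big(qn+2\bar{q}+1\big)$, and then, using the defining relations $\bar{q}=nq_0$ and $q=q_0\bar{q}$, the expression telescopes to $M=q\bar{q}+q-\bar{q}-1$. Substituting into the Ap\'ery formula \eqref{conductorAp} for the conductor yields
$$
c(A)=1+M-q=q\bar{q}-\bar{q}=\bar{q}(q-1)=2g_\mathcal{C}=2g(A),
$$
which is exactly the symmetry condition, and the proposition follows.

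No genuine obstacle is expected here: the only points requiring care are the elementary observation that the maximum of $\bar{A}$ occurs at the corner of the index box (immediate from positivity of the three generators) and the bookkeeping in the reduction of $M$ — in particular, eliminating $n$ and $q_0$ in favour of $q$ and $\bar{q}$ and invoking $q_0\bar{q}=q$ to cancel the leftover term $2q_0\bar{q}$. One could alternatively deduce symmetry from the general principle that $H(P)$ is symmetric if and only if $(2g-2)P$ is a canonical divisor, but the Ap\'ery-set computation above is self-contained and shorter given the machinery already assembled in this section.
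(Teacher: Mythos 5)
Your proposal is correct and follows essentially the same route as the paper: both read the conductor of $H(\mathcal P_\infty)$ off the Ap\'ery set $\bar A=Ap(A,q)$ via formula \eqref{conductorAp}, evaluate the maximum at the corner $t_1=n-1$, $t_2=t_3=q_0-1$, and simplify using $\bar q=nq_0$ and $q_0\bar q=q$ to obtain $c=\bar q(q-1)=2g_\mathcal{C}$. Your intermediate value $M=q\bar q+q-\bar q-1$ and the final cancellation check out.
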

\begin{proof}
By Propositions \ref{resApery} and \ref{proofsemigruppo}, together with Remark \ref{remApery}, the conductor of $H(\mathcal{P}_\infty)$ is
$$
c(H(\mathcal{P}_\infty))=1+{\rm max}\{x\in \bar{A}\}-q,
$$
that is
\begin{eqnarray*}
c(H(\mathcal{P}_\infty))&=&1+(n-1)(q+q_0)+(q_0-1)(q+\bar{q})+(q_0-1)(q(n-1)+\bar{q}+1)-q\\
&=& nq_0q-\bar{q}=\bar{q}q-\bar{q}=2g_C=2g(H(\mathcal{P}_\infty)),
\end{eqnarray*}
whence the claim follows.
\end{proof}
\section{AG codes and AG quantum codes}

\subsection{Number of rational points}\label{numberrationalpoints}
By the non-singularity of any affine point in $\cC$ and by
Proposition \ref{irreducible} it follows that the number of
$\fq$-rational points of $\cC$ is $N_1(\cC)=q^2+1$. This means
that
$$\frac{N_1(\cC)}{g_\cC}=2q_0+\frac{2}{q}+\frac{4}{\qb(q-1)}>2q_0\,.$$

By Proposition \ref{semi}, the smallest positive non gap at the $\fq$-rational point $\mathcal P_\infty$ is $q$. Hence the curve is $\fq$-optimal with respect to the Lewittes bound \cite{Lewittes}.

Let $N_i(\cC)$ be the number of of ${\mathbb F}_{q^i}$-rational
points of $\cC$. By computer results we checked that:

 \begin{itemize}
 \item $q=16$, $q_0=2$, $g_\cC=60$: $N_3(\cC)=N_2(\cC)=N_1(\cC)=1+256$, $N_4(\cC)=65537=q^4+1$;
 \item $q=32$, $q_0=2$, $g_\cC=248$: $N_2(\cC)=N_1(\cC)=1+1024$,
 $N_3(\cC)=1+1024+3\times 1024\times 31=96257$.
\end{itemize}
Notice that in the second case, $N_3(\cC)$ exceeds $(1/\sqrt
2)(q^3+1+2g_\cC\sqrt{ q^3})$. So the curve has ``many'' rational
points over ${\mathbb F}_{q^3}$.

\subsection{Quantum codes and Castle property}\label{ultima}

Let $\mathbb{H}=(\mathbb{C}^q)^{\otimes n}=\mathbb{C}^q \otimes \cdots \otimes \mathbb{C}^q$ be a $q^n$-dimensional Hilbert space. Then the $q$-ary quantum code $C$ of length $n$ and dimension $k$ are the $q^k$-dimensional Hilbert subspace of $\mathbb{H}$. Such quantum codes are denoted by $[[n,k,d]]_q$, where $d$ is the minimum distance.
As in the ordinary case, $C$ can correct up to $\lfloor \frac{d-1}{2}\rfloor$ errors.
Moreover, the quantum version of the Singleton bound states that for a $[[n,k,d]]_q$-quantum code, $2d+k\leq 2+n$ holds.
Again, by analogy with the ordinary case, the quantum Singleton defect and the relative quantum Singleton defect are defined to be $\delta_Q:= n-k-2d+2$ and $\Delta_Q:=\frac{\delta_Q}{n}$, respectively. 

The CSS construction \cite{CS96,St96} showed that quantum codes can be derived from classical linear codes verifying
certain self-orthogonality properties.

\begin{lemma}{\rm{(CSS construction)}}\label{CSS}
Let $C_1$ and $C_2$ be linear codes with parameters $[n,k_1,d_1]_q$ and $[n,k_2,d_2]_q$, respectively, and assume that $C_1 \subset C_2$. Then there exists a $[[n,k_2-k_1,d]]_q$-quantum code with $$d=\min \{ w(c)\, \vert \, c\in (C_2 \setminus C_1)\cup (C_1^{\perp} \setminus C_2^{\perp}) \}.$$
\end{lemma}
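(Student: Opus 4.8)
The final statement to prove is the CSS construction lemma (Lemma~\ref{CSS}). This is a well-established result, so my plan is to give a clean, self-contained proof rather than merely cite it.

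\textbf{Overall approach.} The plan is to realize the quantum code as a stabilizer code built from a self-orthogonal subspace of $\mathbb{F}_q^{2n}$ with respect to the symplectic (or, over $\mathbb{F}_{q^2}$, Hermitian trace) form, using the classical CSS recipe, and then to identify its parameters. First I would recall the symplectic inner product on $\mathbb{F}_q^{2n}$, namely $\langle (a\mid b),(a'\mid b')\rangle_s = a\cdot b' - a'\cdot b$, and the standard fact (the ``stabilizer formalism'', see \cite{CS96,St96}) that any $\mathbb{F}_q$-linear subspace $V\subseteq \mathbb{F}_q^{2n}$ of dimension $n-k$ that is self-orthogonal under $\langle\cdot,\cdot\rangle_s$ gives rise to an $[[n,k,d]]_q$ quantum code, where the minimum distance $d$ is the minimum symplectic weight of a vector in $V^{\perp_s}\setminus V$ (with the convention that $d=n$ when $V=V^{\perp_s}$). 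I would take this correspondence as the known input; everything else is then linear algebra over $\mathbb{F}_q$.

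\textbf{Key steps.} Given $C_1\subseteq C_2$ with parameters as stated, set $V := C_1 \times C_2^{\perp} = \{(a\mid b) : a\in C_1,\ b\in C_2^\perp\}\subseteq \mathbb{F}_q^{2n}$. Step~1: compute $\dim V = k_1 + (n-k_2)= n-(k_2-k_1)$, so the would-be quantum code has dimension $k_2-k_1$. Step~2: check self-orthogonality. For $(a\mid b),(a'\mid b')\in V$ we have $\langle(a\mid b),(a'\mid b')\rangle_s = a\cdot b' - a'\cdot b$; since $a,a'\in C_1\subseteq C_2$ and $b,b'\in C_2^\perp$, both $a\cdot b'$ and $a'\cdot b$ vanish, so $V\subseteq V^{\perp_s}$. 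Step~3: identify $V^{\perp_s}$. A direct computation (or a dimension count, since $\dim V^{\perp_s}= 2n-\dim V = n+(k_2-k_1)$) shows $V^{\perp_s} = C_2 \times C_1^{\perp}$: indeed $(a\mid b)$ is symplectically orthogonal to all of $C_1\times C_2^\perp$ iff $a\perp C_2^\perp$ (i.e. $a\in C_2$) and $b\perp C_1$ (i.e. $b\in C_1^\perp$). Step~4: translate the symplectic-weight distance formula into the stated form. By the stabilizer correspondence, $d = \min\{ \mathrm{swt}(v) : v\in V^{\perp_s}\setminus V\}$ where $\mathrm{swt}(a\mid b)$ is the number of coordinates $i$ with $(a_i,b_i)\neq(0,0)$. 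For $v=(a\mid b)\in (C_2\times C_1^\perp)\setminus(C_1\times C_2^\perp)$, either $a\in C_2\setminus C_1$ or $b\in C_1^\perp\setminus C_2^\perp$ (or both), and in the CSS setting one shows the minimum of $\mathrm{swt}$ over this set equals $\min\{w(c): c\in (C_2\setminus C_1)\cup(C_1^\perp\setminus C_2^\perp)\}$ — the ``$\le$'' direction by taking $v=(a\mid 0)$ or $(0\mid b)$, and the ``$\ge$'' direction because $\mathrm{swt}(a\mid b)\ge\max\{w(a),w(b)\}$ together with the observation that if $(a\mid b)\notin V$ then $a\notin C_1$ or $b\notin C_2^\perp$. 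Assembling Steps~1--4 produces exactly an $[[n,k_2-k_1,d]]_q$ quantum code with the claimed $d$.

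\textbf{Main obstacle.} The genuinely nontrivial ingredient is the passage from a symplectic self-orthogonal code to an actual quantum stabilizer code with the asserted minimum distance — that is, the stabilizer formalism itself, including the subtlety that the distance is a \emph{pure} distance computed on $V^{\perp_s}\setminus V$ rather than on all of $V^{\perp_s}$, and the degenerate case $V=V^{\perp_s}$. Since this is standard and is precisely the content of \cite{CS96,St96}, I would invoke it as a black box and concentrate the written proof on the four linear-algebra steps above, which are short and completely elementary; the bookkeeping in Step~4 (matching the two descriptions of $d$) is the only place requiring a little care, but it is routine once the symplectic weight is unwound.
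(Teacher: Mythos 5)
Your proof is correct, but note that the paper itself does not prove Lemma~\ref{CSS} at all: it is quoted as a known result with a pointer to \cite{CS96,St96}, so there is no ``paper proof'' to compare against. Your argument is the standard stabilizer-code derivation: the subspace $V=C_1\times C_2^{\perp}\subseteq\mathbb{F}_q^{2n}$ is symplectically self-orthogonal because $C_1\subseteq C_2$, its symplectic dual is $C_2\times C_1^{\perp}$ (your dimension count $\dim V=n-(k_2-k_1)$ and the identification of $V^{\perp_s}$ both check out), and the weight bookkeeping in your Step~4 is sound: the ``$\le$'' direction via vectors of the form $(a\mid 0)$ and $(0\mid b)$, and the ``$\ge$'' direction via $\mathrm{swt}(a\mid b)\ge\max\{w(a),w(b)\}$ combined with the fact that $(a\mid b)\notin V$ forces $a\notin C_1$ or $b\notin C_2^{\perp}$. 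The one genuinely non-elementary ingredient --- that a symplectically self-orthogonal $(n-k)$-dimensional subspace yields an $[[n,k,d]]_q$ quantum code whose distance is the minimum symplectic weight on $V^{\perp_s}\setminus V$ --- you correctly isolate and treat as a black box; that is exactly the content of the cited references, so your proof is at least as self-contained as the paper's treatment, and arguably more informative since it makes the reduction to classical linear algebra explicit.
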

Among all the classical codes used to produce quantum codes, AG codes have received considerable attention. 

As an application of Lemma \ref{CSS} to AG codes, La Guardia and Pereira proposed in \cite{LGP} the following \emph{general $t$-point construction}.
\begin{lemma}{\rm{\cite[Theorem 3.1]{LGP}}}{\rm{(General $t$-point construction)}}\label{generaltpoint}
Let $F/\mathbb{F}_q$ be an algebraic function field of genus $g$ and with $n+t$ distinct points $\mathbb{F}_q$-rational for some $n,t>0$. For every $i=1,...,t$, let $a_i,b_i$ be positive integers such that $a_i\leq b_i$ and $$2g-2<\sum_{i=1}^t a_i<\sum_{i=1}^t b_i < n .$$ 
Then there exists a $[[n,k,d]]_q$-quantum code with $k=\displaystyle \sum_{i=1}^t b_i-\sum_{i=1}^t a_i$ and $$d\geq \min\lbrace n-\sum_{i=1}^t b_i, \sum_{i=1}^t a_i-(2g-2)\rbrace.$$
\end{lemma}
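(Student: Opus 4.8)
The plan is to realize the quantum code through the CSS construction of Lemma \ref{CSS}, applied to a nested pair of $t$-point AG codes. Enumerate the $n+t$ rational places as $P_1,\dots,P_n,Q_1,\dots,Q_t$, put $D=P_1+\cdots+P_n$, and set
$$
G_1=\sum_{i=1}^{t}a_iQ_i,\qquad G_2=\sum_{i=1}^{t}b_iQ_i .
$$
Since $a_i\le b_i$ for every $i$, we have $G_1\le G_2$, whence $\mathcal{L}(G_1)\subseteq\mathcal{L}(G_2)$ and therefore $C_1:=C(D,G_1)\subseteq C_2:=C(D,G_2)$; the supports of $G_1$ and $G_2$ consist of places among the $Q_i$, hence are disjoint from $\mathrm{Supp}(D)$, as the construction requires.

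First I would settle the dimensions. Since $\deg G_1=\sum_{i}a_i$ and $\deg G_2=\sum_{i}b_i$ satisfy $2g-2<\deg G_1<\deg G_2<n$, the Riemann--Roch theorem gives $\ell(G_j)=\deg G_j-g+1$, and since $\deg G_j<n$ the evaluation map is injective, so $\dim C_j=\ell(G_j)$. Hence $C_1\subsetneq C_2$ and
$$
\dim C_2-\dim C_1=\sum_{i=1}^{t}b_i-\sum_{i=1}^{t}a_i=k .
$$
Feeding $C_1\subset C_2$ into Lemma \ref{CSS} then yields an $[[n,k,d]]_q$ quantum code with
$$
d=\min\{\,w(c)\ :\ c\in(C_2\setminus C_1)\cup(C_1^{\perp}\setminus C_2^{\perp})\,\}.
$$

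It remains to bound $d$ from below, using the two Goppa designed-distance estimates recalled in Section 2. Every $c\in C_2\setminus C_1$ is a nonzero codeword of $C(D,G_2)$ (nonzero because $0\in C_1$), so $w(c)\ge n-\deg G_2=n-\sum_{i}b_i$. For the other set, one identifies $C_1^{\perp}$ with the differential code $C^{\perp}(D,G_1)$ attached to $G_1$; every $c\in C_1^{\perp}\setminus C_2^{\perp}$ is then a nonzero codeword of $C^{\perp}(D,G_1)$ (nonzero because $0\in C_2^{\perp}$), so $w(c)\ge \deg G_1-2g+2=\sum_{i}a_i-(2g-2)$. Combining the two estimates gives $d\ge\min\{n-\sum_{i}b_i,\ \sum_{i}a_i-(2g-2)\}$, which is at least $1$ by the hypotheses $\sum_{i}b_i<n$ and $2g-2<\sum_{i}a_i$; this completes the argument.

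I expect the only delicate point to be the bookkeeping around $C_1^{\perp}$: one must invoke the duality between the functional code $C(D,G_1)$ and the differential code $C^{\perp}(D,G_1)$, and apply the bound $\deg G_1-2g+2$ for the latter rather than the functional-code bound $n-\deg G_1$. Everything else — the inclusion $C_1\subseteq C_2$, the Riemann--Roch dimension count, and the non-vanishing of codewords in the two set differences — is routine once the divisors are chosen as $G_1\le G_2$ supported on the $t$ distinguished places.
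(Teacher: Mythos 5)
Your proof is correct: the paper itself gives no argument for this lemma (it is quoted directly from \cite[Theorem 3.1]{LGP}), and your derivation — nesting $C(D,\sum_i a_iQ_i)\subseteq C(D,\sum_i b_iQ_i)$, computing dimensions via Riemann--Roch and injectivity of evaluation, and feeding the pair into the CSS construction with the Goppa bound $n-\deg G_2$ on the functional side and $\deg G_1-(2g-2)$ on the differential side — is exactly the standard proof from the cited source. No gaps.
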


By applying Lemma \ref{generaltpoint} to the curve $\cC$ the following result is obtained.

\begin{proposition}\label{genquan}
Let $a,b\in \mathbb{N}$ such that $$\bar{q}(q-1)-2<a<b<q^2.$$
Then there exists a $[[q^2,b-a,d]]_{q}$ quantum code, where $$d\geq \min\lbrace q^2-b,\; a-\bar{q}(q-1)+2 \rbrace .$$
\end{proposition}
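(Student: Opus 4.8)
The plan is to derive Proposition~\ref{genquan} as a direct specialization of Lemma~\ref{generaltpoint} with $t=1$, applied to the single $\mathbb{F}_q$-rational point $\mathcal{P}_\infty$ of $\cC$. First I would record the numerical ingredients: by the genus computation $g_\cC=\tfrac12\bar{q}(q-1)$, so $2g_\cC-2=\bar{q}(q-1)-2$; and by Subsection~\ref{numberrationalpoints} the curve has $N_1(\cC)=q^2+1$ rational points, so deleting $\mathcal{P}_\infty$ leaves $n=q^2$ further rational points to evaluate at, with $t=1$. The hypothesis $\bar{q}(q-1)-2<a<b<q^2$ is then precisely the chain $2g_\cC-2<a_1<b_1<n$ required by Lemma~\ref{generaltpoint} with $a_1=a$, $b_1=b$.

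Next I would simply invoke Lemma~\ref{generaltpoint}: it produces a $[[n,k,d]]_q$ quantum code with $n=q^2$, $k=b_1-a_1=b-a$, and $d\geq\min\{n-b_1,\,a_1-(2g_\cC-2)\}=\min\{q^2-b,\;a-(\bar{q}(q-1)-2)\}=\min\{q^2-b,\;a-\bar{q}(q-1)+2\}$, which is exactly the claimed bound. So the body of the proof is essentially the substitution of these values together with a one-line verification that the divisors $a\mathcal{P}_\infty$ and $b\mathcal{P}_\infty$ satisfy the degree constraints.

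The only genuine content beyond bookkeeping is checking that the one-point divisors $G_a=a\mathcal{P}_\infty$ and $G_b=b\mathcal{P}_\infty$ actually give rise to the nested pair of codes $C_1\subset C_2$ feeding Lemma~\ref{CSS} inside the proof of Lemma~\ref{generaltpoint}; but since Lemma~\ref{generaltpoint} is quoted as a black box, this is already subsumed in its hypotheses, which we have verified. I would also remark, for context, that here the Castle property of $\cC$ over $\mathbb{F}_q$ (equivalently, that $q=m(H(\mathcal{P}_\infty))$ is the smallest nonzero non-gap and $N_1(\cC)-1=q^2$ is a multiple of it, together with the symmetry of $H(\mathcal{P}_\infty)$ established in Proposition~\ref{simmetrico}) is what makes the $t=1$ specialization clean and the designed distances easy to compare; this is worth one explanatory sentence but is not needed for the formal deduction.

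The main obstacle, such as it is, is purely a matter of matching conventions: one must be careful that the ``$n+t$ distinct $\mathbb{F}_q$-rational points'' in Lemma~\ref{generaltpoint} corresponds correctly to $q^2+1=N_1(\cC)$ here, i.e. that $n=q^2$ and not $q^2+1$, and that the inequality $2g-2<a$ is strict so that $\ell(a\mathcal{P}_\infty)=a-g_\cC+1$ by Riemann--Roch (used implicitly in the quoted lemma to get the dimension $k=b-a$). Once these are pinned down the proof is a two-line application with no residual calculation.
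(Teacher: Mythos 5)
Your proof is correct and follows exactly the route the paper intends: Proposition~\ref{genquan} is obtained by specializing Lemma~\ref{generaltpoint} to $t=1$ with $n=q^2$ (since $N_1(\cC)=q^2+1$) and $2g_\cC-2=\bar{q}(q-1)-2$, and your substitutions reproduce the stated parameters. No issues.
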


Many of the properties of AG codes that give rise to good quantum codes were captured in the definition of Castle
curves and weak Castle curves \cite{Castle2,Castle3}.

\begin{definition*}
Let $\mathcal{X}$ be a curve defined over $\mathbb{F}_q$ and $Q$ be an $\mathbb F_q$-rational place of $\mathcal X$. Then the pair $(\mathcal{X},Q)$ is called \textit{Castle} if the following conditions are satisfied.
\begin{itemize}
\item[$C1)$] The Weierstrass semigroup $H(Q)$ is symmetric.
\item[$C2)$] $\vert \mathcal{X}(\mathbb{F}_q)\vert=qm(H(Q))+1$.
\end{itemize}
\end{definition*}

All the Deligne-Lusztig curves are Castle. 

\begin{definition*}
Let $\mathcal{X}$ be a curve defined over $\mathbb{F}_q$ and $Q$ be an $\mathbb F_q$-rational place of $\mathcal X$. Then the pair $(\mathcal{X},Q)$ is called \textit{weak Castle} if the following conditions are satisfied.
\begin{itemize}
\item[$C1)$] The Weierstrass semigroup $H(Q)$ is symmetric;
\item[$WC2)$] For some integer $\ell$, there exists a morphism $f : \mathcal{X}\rightarrow \mathbb{P}^1 = \overline{\mathbb F}_q\cup\{\infty\}$ such that $(f)_{\infty}=\ell Q$ and there exists a set $U=\lbrace \alpha_1,...,\alpha_h \rbrace \subseteq \mathbb{F}_q$, such that for every $i=1,...,h$, $ f^{-1}(\alpha_i)\subseteq \mathcal{X}(\mathbb{F}_q)$ and $\vert f^{-1}(\alpha_i) \vert =\ell$.
\end{itemize}
\end{definition*}
Every Castle curve is weak Castle, since the rational function $f\in \mathcal{L}(Q)$ with $(f)_\infty = m(H(Q)) Q$ and $U = \mathbb{F}_q$ satisfy $WC2)$; see \cite[Proposition 2.5]{Castle2}.
If $(\mathcal{X},Q)$ is weak Castle, define
\begin{equation}\label{divD}
D=\sum_{i=1}^h \sum_{j=1}^\ell P_j^i,
\end{equation}
where $f^{-1}(\alpha_i)=\lbrace P_1^i,\ldots,P_\ell ^i\rbrace$ for every $i=1,\ldots,h$.

The one-point AG codes $C(D,rQ)$ are called \emph{Castle} or \emph{weak Castle codes}. Thanks to the weak Castle condition, these codes can be treated in an unified way. As it was proved, Castle and weak Castle curves provide families of codes with excellent parameters that satisfy certain self-orthogonality properties, making them good candidates for obtaining performing quantum stabilizer codes. 

\begin{proposition}{\rm{(\cite[Proposition 1, Proposition 2, and Corollary 2]{Castle3})}}\label{WeakProp}
Let $(\mathcal{X},Q)$ be a Castle curve of genus $g$ and $C(D,rQ)$ be a Castle code from $\mathcal{X}$.
Define $r^{\perp}=n+2g-2-r$, where $n$ is the length of $C(D,rQ)$.
Then the following properties hold:
\begin{itemize}
\item[(i)]  Let $f\in \mathcal{L}(Q)$ be a rational function such that $(f)_\infty = m(H(Q)) Q$. If $\text{div}(df)=(2g-2)Q$, then $C(D,rQ)^\perp = C(D,r^\perp Q)$.
\item[(ii)] The divisors $D$ and $rQ$ are equivalent. Also, for every $r<n$, $C(D,rQ)$ attains the designed minimum distance $d^*$ if and only if $C(D,(n-r)Q)$ attains the designed minimum distance as well.
\item[(iii)] $(2g-2)Q$ and $(n+2g-2)Q-D$ are canonical divisors, and there exists $x\in (\mathbb{F}_{q}^*)^n$ such that $C(D,rQ)^{\perp}=x\cdot C(D, r^{\perp}Q)$.
\item[(iv)] For every $i=1,...,r$, let $r_i:=\min \lbrace r: \ell (rQ)-\ell ((r-n)Q)\geq i \rbrace$ and $C_i:=C(D,r_iQ)$.
Then $C_i$ has dimension $i$, and
$$C_0=(0)\subset C_1\subset \cdots \subset C_n=\mathbb{F}_q^n$$ is a formally self-dual sequence of codes.
\item[(v)] If $2i\leq n$, then there exist quantum codes with parameters $[[n,n-2i,\geq d(C_{n-i})]]_{q}$ where $d(C_{n-i})\geq n-r_{n-i}+\gamma_{a+1}$, with $a=\ell((r_{n-i}-n)Q)$ and $$\gamma_{a+1}=\min \lbrace \deg(A) : A \textrm{ is a rational divisor on } \mathcal{X} \textrm{ with } \ell(A)\geq a+1\rbrace.$$
\end{itemize}
\end{proposition}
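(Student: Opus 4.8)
\emph{Plan.} Everything rests on two structural facts about a Castle pair $(\cX,Q)$, which I would establish at the outset. Write $m:=m(H(Q))$, so that $n:=\deg D=qm$, and let $f\in\cL(mQ)$ be the degree-$m$ function realising condition $C2)$, so that $|f^{-1}(\alpha)|=m$ for each $\alpha\in\fq$ and $\mathrm{supp}(D)=\bigcup_{\alpha\in\fq}f^{-1}(\alpha)=\cX(\fq)\setminus\{Q\}$. First, from $\prod_{\alpha\in\fq}(f-\alpha)=f^{q}-f$ and $(f-\alpha)=f^{-1}(\alpha)-mQ$ one gets $(f^{q}-f)=D-nQ$, hence $D\sim nQ$; this is the divisor equivalence asserted in (ii) (I read the ``$rQ$'' printed there as ``$nQ$''). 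Second, since $H(Q)$ is symmetric all its gaps lie in $[1,2g-1]$ with $2g-1$ itself a gap, so $|H(Q)\cap[0,2g-2]|=g$, i.e. $\ell((2g-2)Q)=g$; a divisor of degree $2g-2$ with $\ell=g$ is canonical, so $(2g-2)Q$ is canonical, and then $(n+2g-2)Q-D\sim(2g-2)Q$ is canonical too, giving the first assertion of (iii). I would also fix once and for all the differential $\eta:=df/(f^{q}-f)$.

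For (i) and the rest of (iii): at $P\in f^{-1}(\alpha)\subseteq\mathrm{supp}(D)$ the function $f-\alpha$ is a local parameter, $df$ is a unit times $d(f-\alpha)$, and $f^{q}-f$ has a simple zero with $\tfrac{d}{df}(f^{q}-f)\big|_{f=\alpha}=q\alpha^{q-1}-1=-1$ (as $\mathrm{char}\,\fq\mid q$); hence $v_P(\eta)=-1$ and $\mathrm{res}_P(\eta)=-1$ at \emph{every} such $P$. With $W:=(\eta)$ (canonical, $W\ge-D$), the standard duality $C(D,G)^{\perp}=C_{\Omega}(D,G)=\mathrm{diag}(c)\cdot C(D,D+W-G)$ for a differential with simple poles of residue $c_i$ at the points of $D$ applies, the diagonal factor being here a scalar; with $G=rQ$ this reads $C(D,rQ)^{\perp}=C(D,D+W-rQ)$. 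Now $D+W-rQ\sim nQ+(2g-2)Q-rQ=r^{\perp}Q$ via a function $h$ with $(h)=D+W-(n+2g-2)Q$, so $v_P(h)=1+v_P(W)=0$ on $\mathrm{supp}(D)$; evaluating at $D$ therefore gives $C(D,rQ)^{\perp}=x\cdot C(D,r^{\perp}Q)$ with $x=(h(P_1),\dots,h(P_n))\in(\fq^{*})^{n}$, and since $(h)$ does not involve $r$ this single $x$ serves every $r$ — proving (iii). When in addition $\mathrm{div}(df)=(2g-2)Q$, one has $W=(n+2g-2)Q-D$ exactly, so $(h)=0$, $h$ is constant, $x$ is a scalar vector, and $C(D,rQ)^{\perp}=C(D,r^{\perp}Q)$ as codes — this is (i).

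For (ii) and (iv): a nonzero codeword of $C(D,rQ)$ has weight $n-\deg E'$, where $E'\le D$ is its divisor of zeros on $\mathrm{supp}(D)$ and $\cL(rQ-E')\ne0$, so $C(D,rQ)$ meets $d^{*}=n-r$ iff there is an effective $E\le D$ of degree $r$ with $rQ\sim E$; using $D\sim nQ$, the involution $E\mapsto D-E$ matches such $E$ bijectively with the effective divisors $\le D$ of degree $n-r$ equivalent to $(n-r)Q$, which is the minimum-distance equivalence in (ii). For (iv), $D\sim nQ$ also gives $\dim C(D,rQ)=\ell(rQ)-\ell(rQ-D)=\ell(rQ)-\ell((r-n)Q)$, a nondecreasing function of $r$ with unit steps; hence $r_i$ is precisely its $i$-th jump point, $\dim C_i=i$, and $C_0\subset\cdots\subset C_n$ is a complete flag. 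The canonical-divisor computation yields $\dim C(D,rQ)+\dim C(D,r^{\perp}Q)=n$, so the set of jump points is symmetric under $r\mapsto n+2g-1-r$; this forces $r_{n+1-i}=n+2g-1-r_i$, i.e. $r_i^{\perp}=r_{n+1-i}-1$. Since $C(D,(r_{n+1-i}-1)Q)$ and $C_{n-i}$ both have dimension $n-i$ with the former containing the latter (by monotonicity, as $r_{n-i}\le r_{n+1-i}-1$), they coincide; combined with (iii) this gives $C_i^{\perp}=x\cdot C_{n-i}$ and, symmetrically, $C_{n-i}^{\perp}=x\cdot C_i$ with the \emph{same} $x$ — the sequence is formally self-dual.

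For (v): I would apply the CSS construction, Lemma \ref{CSS}, to $C_i\subset C_{n-i}$ (legitimate since $2i\le n$), obtaining a $[[n,(n-i)-i,d]]_q=[[n,n-2i,d]]_q$ quantum code with $d=\min\{w(c):c\in(C_{n-i}\setminus C_i)\cup(C_i^{\perp}\setminus C_{n-i}^{\perp})\}$. By (iv) the second set equals $x\cdot(C_{n-i}\setminus C_i)$, which has the same weights as $C_{n-i}\setminus C_i$, so $d=\min_{c\in C_{n-i}\setminus C_i}w(c)\ge d(C_{n-i})$, giving the claimed parameters. Finally, with $a:=\ell(r_{n-i}Q-D)=\ell((r_{n-i}-n)Q)$, a minimum-weight word of $C_{n-i}=C(D,r_{n-i}Q)$ is $\mathrm{ev}_D(f)\ne0$ with $f\in\cL(r_{n-i}Q-E')$, $E'\le D$ effective of degree $n-d(C_{n-i})$; since $\mathrm{ev}_D(f)\ne0$ we have $f\notin\cL(r_{n-i}Q-D)$, hence $\ell(r_{n-i}Q-E')\ge a+1$, so $\deg(r_{n-i}Q-E')\ge\gamma_{a+1}$ and $d(C_{n-i})\ge n-r_{n-i}+\gamma_{a+1}$. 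The part I expect to require the most care is the index bookkeeping in (iv) — the one-step discrepancy between $r_i^{\perp}$ and $r_{n+1-i}$, the upgrading of equidimensional inclusions to equalities via monotonicity, and the check that the monomial vector $x$ from (iii) is genuinely independent of $r$, which is exactly what allows it to be factored out uniformly in (v).
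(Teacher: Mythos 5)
This proposition is not proved in the paper at all: it is quoted verbatim from Munuera--Ten\'orio--Torres \cite[Proposition 1, Proposition 2, Corollary 2]{Castle3} (building on \cite{Castle2}), so there is no in-paper argument to compare yours against. Your blind proof is, as far as I can check, a correct self-contained reconstruction of the arguments in those references, built on the two right structural facts: the identity $(f^q-f)=D-nQ$ (hence $D\sim nQ$; you are also right that the printed ``$D$ and $rQ$ are equivalent'' in (ii) must read $nQ$), and the symmetry of $H(Q)$ forcing $\ell((2g-2)Q)=g$, hence $(2g-2)Q$ canonical. The differential $\eta=df/(f^q-f)$ with all residues equal to $-1$, combined with the standard duality $C(D,G)^\perp=C_\Omega(D,G)=\mathrm{diag}(\mathrm{res}_{P_i}\eta)\cdot C(D,D+(\eta)-G)$, correctly yields (i) and (iii); the jump-set symmetry argument for (iv) and the gonality bound for (v) are also sound. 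Two cosmetic points: in (iii) the twisting vector should be $x=(h(P_1)^{-1},\dots,h(P_n)^{-1})$ rather than $(h(P_1),\dots,h(P_n))$ (immaterial for the existence statement, but worth getting right since you reuse $x$ in (v)); and in (ii) you should say explicitly that because $D$ is reduced, an effective $E\le D$ of degree $r$ is automatically a sum of $r$ \emph{distinct} points, which is what makes ``weight exactly $n-r$'' equivalent to ``$E\sim rQ$ for some effective $E\le D$ of degree $r$''. Neither affects correctness.
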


The following statement is a consequence of Section \ref{numberrationalpoints} and Proposition \ref{simmetrico}.

\begin{proposition}\label{èCastle}
The pair $(\mathcal{C},\mathcal{P}_\infty)$ is Castle.
\end{proposition}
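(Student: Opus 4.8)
The plan is to verify directly the two defining conditions of a Castle pair for $(\mathcal{C},\mathcal{P}_\infty)$. Condition $C1)$, that the Weierstrass semigroup $H(\mathcal{P}_\infty)$ be symmetric, is precisely the statement of Proposition \ref{simmetrico}, so nothing further is needed on that front.

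For condition $C2)$ I would first pin down the multiplicity $m(H(\mathcal{P}_\infty))$. By Proposition \ref{semi}, $H(\mathcal{P}_\infty)$ is generated by $q$, $q+q_0$, $q+\bar{q}$ and $q(n_1-1)+\bar{q}+1$; since every generator is at least $q$ and $q$ is itself a generator, the smallest positive element of the semigroup is $q$, i.e. $m(H(\mathcal{P}_\infty))=q$. This is the same observation already recorded in Section \ref{numberrationalpoints} in connection with the Lewittes bound.

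Then I would combine this with the point count. As noted at the beginning of Section \ref{numberrationalpoints}, the non-singularity of the affine points of $\cC$ together with Proposition \ref{irreducible} (uniqueness of the place over $Y_\infty$) gives $\vert\mathcal{C}(\mathbb{F}_q)\vert = N_1(\cC) = q^2+1$. Hence $\vert\mathcal{C}(\mathbb{F}_q)\vert = q\cdot q + 1 = q\cdot m(H(\mathcal{P}_\infty))+1$, which is exactly $C2)$. With both conditions in hand, $(\mathcal{C},\mathcal{P}_\infty)$ is Castle.

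Since all the substantial work — the computation of the Weierstrass semigroup, the proof of its symmetry, and the count of $\mathbb{F}_q$-rational points — has already been carried out in the preceding sections, I do not expect a genuine obstacle here: the argument is short bookkeeping. The only point worth stating explicitly is the identification $m(H(\mathcal{P}_\infty))=q$, which has to be read off from the generating set of Proposition \ref{semi} rather than merely assumed.
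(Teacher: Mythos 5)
Your proof is correct and follows exactly the route the paper intends: the paper states the proposition as an immediate consequence of Section \ref{numberrationalpoints} (which gives $\vert\mathcal{C}(\mathbb{F}_q)\vert=q^2+1$ and notes that the smallest positive non-gap at $\mathcal{P}_\infty$ is $q$) and Proposition \ref{simmetrico}. Your explicit identification of $m(H(\mathcal{P}_\infty))=q$ from the generating set is the only bookkeeping step the paper leaves implicit, and you handle it correctly.
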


Numerical computations seem to suggest that $(\mathcal{C},\mathcal{P}_\infty)$ is never Castle over $\mathbb F_{q^i}$ if $i>1$. However, being Castle over $\mathbb{F}_q$, it is readily seen that $(\mathcal{C},\mathcal{P}_\infty)$ is weak Castle over $\mathbb F_{q^i}$ for every $i\geq 1$. We provide an explicit proof of this fact.

\begin{proposition}\label{èweakCastle}
The pair $(\mathcal{C},\mathcal{P}_\infty)$ is weak Castle over $\mathbb{F}_{q^i}$, $i\geq 1$.
\end{proposition}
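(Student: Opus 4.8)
The plan is to exhibit an explicit rational function on $\cC$ whose pole divisor is supported only at $\cP_\infty$ and whose fibres over suitable elements of $\mathbb F_{q^i}$ split completely into $\mathbb F_{q^i}$-rational points. The natural candidate is $f=x$, viewed as a map $\cC\to\mathbb P^1$. First I would recall that by Proposition \ref{simmetrico} the semigroup $H(\cP_\infty)$ is symmetric, and this property is intrinsic to the curve and does not depend on which finite field of definition we work over; hence condition $C1)$ holds over every $\mathbb F_{q^i}$. It remains to verify $WC2)$ for the morphism $f=x$, which by Proposition \ref{scho}(1) has $(x)_\infty=q\,\cP_\infty$, so $\ell=q$ in the notation of the definition.

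Next I would analyse the fibres of $x$. For $\alpha\in\mathbb F_q$, the fibre $x^{-1}(\alpha)$ consists of the points of $\cC$ with first coordinate $\alpha$; on the affine model $x^{q_0}(x^q+x)=y^q+y$ these are the solutions of $y^q+y=\alpha^{q_0}(\alpha^q+\alpha)=\alpha^{q_0}(\alpha^2+\alpha)$ if $\alpha\in\mathbb F_q$ (using $\alpha^q=\alpha$), i.e. an Artin--Schreier-type equation $y^q+y=c$ with $c\in\mathbb F_q$. Since the trace map $\mathrm{Tr}_{\mathbb F_q/\mathbb F_2}$ is surjective, for roughly half of the $\alpha\in\mathbb F_q$ the constant $c$ has absolute trace zero, and then $y^q+y=c$ has exactly $q$ solutions, all in $\mathbb F_q$; for such $\alpha$ the fibre $x^{-1}(\alpha)$ consists of $q=\ell$ distinct $\mathbb F_q$-rational points and contains no point at infinity (the only place over $x=\infty$ is $\cP_\infty$). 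Taking $U$ to be the set of these good $\alpha$'s (which is nonempty, indeed of size about $q/2$) establishes $WC2)$ over $\mathbb F_q$, recovering Proposition \ref{èCastle} from this point of view; and the very same argument with $\mathbb F_q$ replaced by $\mathbb F_{q^i}$ works, because for $\alpha\in\mathbb F_{q^i}$ the right-hand side $\alpha^{q_0}(\alpha^q+\alpha)$ again lies in $\mathbb F_{q^i}$, the equation $y^q+y=c$ with $c\in\mathbb F_{q^i}$ has all its $q$ roots in $\mathbb F_{q^i}$ whenever $\mathrm{Tr}_{\mathbb F_q/\mathbb F_2}$ applied appropriately vanishes, and such $c$ occur for a positive proportion of $\alpha\in\mathbb F_{q^i}$. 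Thus one gets a set $U\subseteq\mathbb F_{q^i}$ with $x^{-1}(\alpha)\subseteq\cC(\mathbb F_{q^i})$ and $|x^{-1}(\alpha)|=q$ for all $\alpha\in U$.

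The step that needs the most care is the claim that the $q$ points in a good fibre are genuinely distinct $\mathbb F_{q^i}$-rational points of the nonsingular model, rather than collapsing or hiding ramification. Here I would invoke that every affine point of $\cC$ is nonsingular (stated in Section \ref{numberrationalpoints}) and that the extension $\mathbb F_q(\cC)/\mathbb F_q(x)$ is Galois of degree $q$ with group $\cD=\{\delta_a:a\in\mathbb F_q\}$ acting by $y\mapsto y+a$; this group acts simply transitively on each fibre away from the branch locus, and the only branch point is $x=\infty$, so every affine fibre $x^{-1}(\alpha)$ with $\alpha$ in the base field has exactly $q$ points, permuted freely by $\cD$ which is defined over $\mathbb F_q\subseteq\mathbb F_{q^i}$. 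Consequently, if one point of the fibre is $\mathbb F_{q^i}$-rational then all $q$ of them are, and in fact it suffices that $c$ lie in the image of $y\mapsto y^q+y$ on $\mathbb F_{q^i}$ — equivalently that a single solution be $\mathbb F_{q^i}$-rational — for the whole fibre to be $\mathbb F_{q^i}$-rational of size $q$. Choosing $U$ accordingly and setting $D=\sum_{\alpha\in U}x^{-1}(\alpha)$ as in \eqref{divD} completes the verification of $WC2)$, and together with $C1)$ this proves that $(\cC,\cP_\infty)$ is weak Castle over $\mathbb F_{q^i}$ for every $i\ge 1$. $\qed$
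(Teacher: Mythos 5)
Your overall strategy is exactly the paper's: take $f=x$, note $(x)_\infty=q\,\mathcal{P}_\infty$ by Proposition \ref{scho}, and exhibit fibres of $x$ that split completely into $\mathbb{F}_{q^i}$-rational points. However, the fibre analysis contains two concrete errors. First, for $\alpha\in\mathbb{F}_q$ you write $\alpha^q+\alpha=\alpha^2+\alpha$; since $\alpha^q=\alpha$ and the characteristic is $2$, in fact $\alpha^q+\alpha=2\alpha=0$, so the fibre equation is simply $y^q+y=0$, whose solution set is all of $\mathbb{F}_q$. Second, the splitting criterion you invoke is the wrong one: the operator here is $y\mapsto y^q+y$ (an additive polynomial with kernel $\mathbb{F}_q$), not $y\mapsto y^2+y$, so the condition for $y^q+y=c$ to have all $q$ roots in $\mathbb{F}_{q^i}$ is $c\in\{\,\beta^q+\beta:\beta\in\mathbb{F}_{q^i}\}$, equivalently $\mathrm{Tr}_{\mathbb{F}_{q^i}/\mathbb{F}_q}(c)=0$ --- not vanishing of the absolute trace $\mathrm{Tr}_{\mathbb{F}_q/\mathbb{F}_2}$. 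Over $\mathbb{F}_q$ itself the image of $y\mapsto y^q+y$ is $\{0\}$, so your claim that ``roughly half'' of the $\alpha\in\mathbb{F}_q$ give a completely split fibre of size $q$ inside $\mathbb{F}_q$ is false as stated; with the corrected computation the constant $c$ is $0$ for every $\alpha\in\mathbb{F}_q$, so in fact \emph{all} $q$ fibres over $\mathbb{F}_q$ split completely (as they must, since $|\mathcal{C}(\mathbb{F}_q)|=q^2+1$ forces this for the Castle property of Proposition \ref{èCastle}).

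These errors do not sink the final conclusion, because condition $WC2)$ only requires a nonempty set $U$, and $\alpha=0$ (indeed any $\alpha\in\mathbb{F}_q\subseteq\mathbb{F}_{q^i}$) gives $c=0$ and hence a fibre of $q$ distinct points with coordinates in $\mathbb{F}_q\subseteq\mathbb{F}_{q^i}$; this is precisely the paper's one-line argument, with $U=\mathbb{F}_q$ for every $i$. Your appeal to the Galois action of $\mathcal{D}$ to see that the fibre points are distinct and simultaneously rational is a correct and slightly more structural way to phrase the last step. So: same route as the paper, correct skeleton, but you should replace the trace-to-$\mathbb{F}_2$ heuristic by the observation that $a^q+a=0$ for $a\in\mathbb{F}_q$, which makes the splitting automatic.
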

\begin{proof}
To prove the claim it is enough to show that there exists a function defined over $\mathbb F_{q^i}$ whose pole divisor is $\ell\mathcal P_\infty$, $\ell>0$, and such that its zeros are $\ell$ distinct $\mathbb F_{q^i}$-rational points of $\mathcal{C}$.
A possible choice is to consider $x\in \mathbb{F}_q(\mathcal{C)}$, since for any element $a\in \fq\subset \mathbb F_{q^i}$ the equation
$$
Y^q+Y=a^{q_0}(a^q+a)=0
$$
has $q$ distinct solutions in $\mathbb F_{q^i}$, and by Proposition \ref{scho} its pole divisor is $q\mathcal P_\infty$.
\end{proof}

Now we construct quantum codes from $\cC$ exploiting the Castle property of $(\mathcal{C},\mathcal{P}_\infty)$. Let $D$ be as in Equation $\eqref{divD}$, namely
$$
D=\sum_{P\in \mathcal{C}(\mathbb{F}_q)\setminus\{ \mathcal{P}_{\infty}\}} P.
$$
Then $C(D,r\mathcal{P}_\infty)$, $r>0$, are Castle codes of length $n=q^2$.
Moreover, with the notations of Proposition \ref{WeakProp}, since all the zeros of $x$ are simple and its unique pole $\mathcal{P}_{\infty}$ is totally ramified, we have  ${\text{div}}(dx)=(2g_\cC-2)\mathcal{P}_\infty$. Therefore, by (i) of Proposition \ref{WeakProp}, $C(D,r\mathcal{P}_\infty)^\perp = C(D,r^\perp \mathcal{P}_\infty)$.
Now, let
$$
H(\mathcal{P}_\infty)=\{\rho_0=0<\rho_1<\rho_2<\cdots\}.
$$
For $\rho_a, \rho_{a+b}\in H(\mathcal{P}_\infty)$, with $a,b\geq 1$ consider the codes 
\begin{equation*}
C_{a+b}:=C^{\perp}(D,\rho_{a+b}\mathcal{P}_\infty) \quad \text{ and } \quad C_a:=C^{\perp}(D,\rho_{a}\mathcal{P}_\infty),
\end{equation*} whose dimensions are $k_1=q^2-h_{a+b}$ and $k_2=q^2-h_{a}$, where $h_i$ is the number of non-gaps at $\mathcal{P}_\infty$ that do not exceed $i$. Note that $C_{a+b} \subseteq C_a$ and $k_2-k_1=b$. Then the CSS construction yields a $[[q^2,b,d]]_{q}$-quantum code such that
$d\geq \min \lbrace d_{ORD}(C_a),d_{1} \rbrace$, where $d_1$ is the minimum distance of the code $C(D,\rho_{a+b}\mathcal{P}_\infty)$. Since $C(D,\rho_{a+b}\mathcal{P}_\infty)=C^{\perp}(D,\rho_{a+b}^{\perp}\mathcal{P}_\infty)$, the lower bound on $d$ reads
\begin{equation}\label{stima}
d\geq\min\lbrace d_{ORD}(C_a), d_{ORD}(C^{\perp}(D,\rho_{a+b}^{\perp}P_{\infty})) \rbrace.
 \end{equation}
Note that the order bound can be computed only in terms of the Weierstrass semigroup $H(\mathcal{P}_\infty)$, that we determined explicitly in Section \ref{Section:semi}.

\section*{Acknowledgements}
The research of M. Timpanella was partially supported  by the Italian National Group for Algebraic and Geometric Structures and their Applications (GNSAGA - INdAM). The author is funded by the project ``Metodi matematici per la firma digitale ed il cloud computing" (Programma Operativo Nazionale (PON) ``Ricerca e Innovazione" 2014-2020, University of Perugia). The author would like to thank Massimo Giulietti for his helpful suggestions.


\begin{thebibliography}{999}

\bibitem{CS96}  A. R. Calderbank, P. W. Shor, \emph{Good quantum error-correcting codes exist}, Phys. Rev. A {\bf 54}, 1098–1105 (1996).

\bibitem{RODI} M. Giulietti, G. Korchm\'aros, {\em Garden of curves with many automorphisms}. 
In: {Algebraic curves over finite fields  - Radon series on computational and applied mathematics  16}, Eds. Harald Niederreiter, Alina Ostafe,
Daniel Panario, Arne Winterhof, De Gruyter, pp. 93--120, (2014).

\bibitem{Goppa1} V.D. Goppa, \textit{Codes on algebraic curves}, Dokl. Akad. Nauk SSSR {\bf 259}, no. 6, pp. 1289--1290, (1981).

\bibitem{HKT} J.W.P. Hirschfeld, G. Korchm\'aros, and F. Torres, {\it Algebraic Curves over a Finite Field}, Princeton Series in Applied Mathematics, Princeton University Press, Princeton, NJ, xx+696 pp, (2008).

\bibitem{HLP} T. H\o holdt, J.H. van Lint, and R. Pellikaan, \textit{Algebraic geometry codes, in Handbook of Coding Theory}, V. S. Pless, W. C. Huffman, and R. A. Brualdi, Eds. Amsterdam, The Netherlands: Elsevier, vol. 1, pp. 871--961, (1998).

\bibitem{KNT} G. Korchm\'{a}ros, G.P. Nagy, M. Timpanella, \textit{Codes and gap sequences of {H}ermitian curves}, IEEE Trans. Inform. Theory {\bf 66}, no.6, pp. 3547--3554, (2020).


\bibitem{LGP} G.G. La Guardia and F.R.F. Pereira, \textit{Good and asymptotically good quantum codes derived from Algebraic geometry codes}, Quantum Inf. Process., {\bf 16}, no. 6, Art. 165, 12 pp., (2017).

\bibitem{LV} L. Landi, L. Vicino, \textit{Two-point {AG} codes from the Beelen-Montanucci maximal curve}, Finite Fields and Their Applications, {\bf 80}, 102009, (2022).

\bibitem{Lewittes}  J. Lewittes, Places of degree one in function fields over finite fields, J. Pure Appl. Algebra {\bf 69}, pp. 177–183, (1990).

\bibitem{LT} S. Lia, M. Timpanella, {AG} codes from $\mathbb{F}_{q^7}$-rational points of the {GK} maximal curve, Applicable Algebra in Engineering, Communications and Computing, (2021).

\bibitem{MTZ} M. Montanucci, M. Timpanella, G. Zini, \textit{A{G} codes and {AG} quantum codes from cyclic extensions of the {S}uzuki and {R}ee curves}, J. Geom {\bf 109}, no. 1, paper no. 23, 18 pp., (2018).
  
\bibitem{Castle1} C. Munuera, A. Sepúlveda, F. Torres,  {\em Algebraic Geometry Codes from Castle Curves}. In: Barbero Á. (eds) Coding Theory and Applications. Lecture Notes in Computer Science, vol 5228. Springer, Berlin, Heidelberg (2008). 


\bibitem{Castle2} C. Munuera, A. Sepúlveda, F. Torres, {\em Castle curves and codes}. Advances in Mathematics of Communications {\bf 3} (4), pp. 399--408 (2009).

\bibitem{Castle3}Munuera C., Tenório W., and Torres F. {\em Quantum error-correcting codes from algebraic geometry codes of Castle type}. Quantum Inf Process {\bf 15}, 4071--4088 (2016).

\bibitem{St96} A. M. Steane, \emph{Multiple-particle interference and quantum error correction}, Proc. R. Soc. Lond. Ser. A {\bf 452}, 2551–2557 (1996).

\bibitem{stichtenoth1993} H.~Stichtenoth, \emph{Algebraic Function Fields and Codes}, Springer, (2009).
\end{thebibliography}
\end{document}